\newcommand{\R}{\mathbb{R}}
\newcommand{\lr}[1]{\langle #1 \rangle}
\newcommand{\eps}{\varepsilon}
\newtheorem{thm}{Theorem}[section]
\newtheorem{prop}[thm]{Proposition}
\newtheorem{dfn}[thm]{Definition}
\newtheorem{lem}[thm]{Lemma}
\newtheorem{cor}[thm]{Corollary}
\theoremstyle{remark}
\newtheorem{rmk}[thm]{Remark}
\DeclareMathOperator{\supp}{supp}
\numberwithin{equation}{section}
\date{\today}
\title[Norm inflation for Boussinesq and Kawahara eqs.]{Norm inflation for the generalized Boussinesq and Kawahara equations}
\author[M. Okamoto]{Mamoru Okamoto}
\address{Division of Mathematics and Physics, Faculty of Engineering, Shinshu University, 4-17-1 Wakasato, Nagano City 380-8553, Japan}
\email{m\_okamoto@shinshu-u.ac.jp}
\thanks{This work was supported by JSPS KAKENHI Grant number JP16K17624.}
\subjclass[2010]{35Q55, 35B30}
\keywords{Boussinesq equation, Kawahara equation, ill-posedness, norm inflation, general initial data}
\date{\today}
\begin{document}

\begin{abstract}
We consider ill-posedness of the Cauchy problem for the generalized Boussinesq and Kawahara equations.
We prove norm inflation with general initial data, an improvement over the ill-posedness results by Geba et al., Nonlinear Anal. 95 (2014), 404-413 for the generalized Boussinesq equations and by Kato, Adv. Differential Equations 16 (2011), no. 3-4, 257-287 for the Kawahara equation.
\end{abstract}

\maketitle

\section{Introduction}

We consider the Cauchy problem for the generalized Boussinesq equation
\begin{equation} \label{gB}
\begin{aligned}
& \partial _t^2 u - \Delta u + \Delta ^2 u + \Delta (N(u))=0,\\
& u(0,x) = u_0(x), \quad \partial _t u(0,x)=u_1(x)
\end{aligned}
\end{equation}
where $u = u(t,x) : \R \times \R^d \rightarrow \R$ is an unknown function, and $u_0$ and $u_1$ are given functions.
Falk et al. \cite{FLS} derived this equation for $d=1$ with $N(u) = 4u^3-6u^5$ in a study of shape-memory alloys.
For $N(u)=u^2$, this is the ``good'' Boussinesq equation, which arises as a model for nonlinear strings (\cite{Zak}).

In the sequel, we consider \eqref{gB} with $N(u)=u^p$.
If we ignore $\Delta u$, \eqref{gB} is invariant under the scaling transformation $u \mapsto \lambda^{\frac{2}{p-1}} u(\lambda^2t, \lambda x)$.
From
\[
\| u_{\lambda}(0,\cdot ) \|_{\dot{H}^s} = \lambda ^{s-\frac{d}{2}+\frac{2}{p-1}} \| u_0 \|_{\dot{H}^s},
\]
we call the index $s_c:= \frac{d}{2}-\frac{2}{p-1}$ scaling-critical, although the generalized Boussinesq equation does not have the exact scaling invariance.

Well-posedness of \eqref{gB} has been studied intensively for $d=1$ (see \cite{BS, Lin, Far09, Far09-2, KT, Kis13} and references therein).
Farah \cite{Far09} proved that \eqref{gB} with $d=1$ and $N(u)=u^p$ is well-posed in $H^s(\R) \times H^{s-2}(\R)$ if $d=1$, $p>1$, and $s \ge \max (s_c,0)$.
Kishimoto \cite{Kis13} showed that \eqref{gB} with $d=1$ and $N(u)=u^2$ is well-posed in $H^s(\R) \times H^{s-2}(\R)$ if $s \ge -\frac{1}{2}$.
He also proved that this result is sharp in the sense that the flow map $(u_0,u_1) \in H^s(\R) \times H^{s-2}(\R) \mapsto u(t) \in H^s(\R)$ of \eqref{gB} fails to be continuous at zero if $s<-\frac{1}{2}$.

Geba et al. \cite{GHK} proved that the flow map $(u_0,u_1) \in H^s(\R) \times H^{s-2}(\R) \mapsto u(t) \in H^s(\R)$ of \eqref{gB} fails to be $p$-times differentiable at zero if
\[
s<
\begin{cases}
-\frac{2}{p}, & \text{for $p$ odd},\\
-\frac{1}{p}, & \text{for $p$ even}.
\end{cases}
\]
It is known that the flow map is smooth if we obtain well-posedness through an iteration argument (\cite{BT}).
Hence, they showed that the standard iteration argument fails to work for \eqref{gB}.
However, as well-posedness involves the continuity of the flow map, there is a gap between ill-posedness and the presence of an irregular flow map.
In this paper, we prove ill-posedness of \eqref{gB} by observing norm inflation.

\begin{thm} \label{IP}
Let $d \in \mathbb{N}$, $p \in \mathbb{Z}_{\ge 2}$, and $N(u) = u^p$.
Assume that one of the following holds:
\begin{itemize}
\item $d=1$, $p=3$, $s \le - \frac{1}{2}$.
\item $d \in \mathbb{N}$, $p=2$, $s<-\frac{1}{2}$.
\item $d \in \mathbb{N}$, $p \ge 3$, $s<\min (s_c,0)$.
\end{itemize}
For any $(u_0, u_1) \in H^s(\R^d) \times H^{s-2}(\R ^d)$, and any $\eps >0$, there exists a solution $u_{\eps}$ to \eqref{gB} and $t_{\eps} \in (0, \eps )$ such that
\[
\| u_{\eps}(0) - u_0 \|_{H^s} + \| \partial _t u_{\eps}(0) - u_1 \|_{H^{s-2}} < \eps , \quad
\| u_{\eps}(t_{\eps}) \|_{H^s}> \eps^{-1}.
\]
In particular, the flow map of \eqref{gB} is discontinuous everywhere in $H^s(\R^d) \times H^{s-2}(\R^d)$.
\end{thm}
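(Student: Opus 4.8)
The plan is to establish norm inflation by constructing explicit approximate solutions whose high-frequency data is small in $H^s$ but whose first Picard iterate (the first nonlinear correction) has large $H^s$ norm at some short time, and then to control the error between the true solution and this approximation. This is the by-now standard Bejenaru--Tao / Iwabuchi--Ogawa scheme adapted to the Boussinesq dispersion relation. Since the statement only asks for \emph{discontinuity everywhere} and norm inflation around arbitrary data $(u_0,u_1)$, I would first reduce to the case $(u_0,u_1)=(0,0)$: given general data, solve the equation with that data on a short time interval (or simply use that the flow, wherever defined, depends continuously on perturbations of the linear evolution in the relevant space), and superpose a small, very-high-frequency bump supported in Fourier space far from the support of $\widehat{u_0},\widehat{u_1}$; the nonlinear interaction of the bump with itself dominates, so it suffices to prove norm inflation at zero.

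The core construction: write \eqref{gB} in Duhamel form. Let $W(t)$ denote the linear propagator associated with $\partial_t^2 - \Delta + \Delta^2$, so that the solution with data $(u_0,u_1)$ and forcing $-\Delta(u^p)$ satisfies
\[
u(t) = \partial_t W(t) u_0 + W(t) u_1 - \int_0^t W(t-t') \Delta\bigl(u(t')^p\bigr)\,dt'.
\]
Take data of the form $u_0 = 0$, $u_1 = \gamma R^{-s} \sum_{j} \bigl(\mathbf{1}_{Q_j^+} + \mathbf{1}_{Q_j^-}\bigr)^{\vee}$, where $Q_j^\pm$ are $O(1)$-sized Fourier cubes centered at $\pm \xi_j$ with $|\xi_j|\sim R$, the number of cubes is $\sim N$, and $\gamma,R,N$ are parameters to be tuned. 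The linear part has $H^s$-norm $\sim \gamma N^{1/2}$, which we force to be small. The first iterate $u^{(1)}(t) = -\int_0^t W(t-t')\Delta\bigl((\text{linear part})^p\bigr)dt'$ produces, among its frequency outputs, a contribution near frequency zero (from combining $\xi_j$'s and $-\xi_j$'s in $p/2$ pairs, when $p$ is even, or a near-zero resonant output when $p$ is odd) whose amplitude, after integrating the oscillatory factor $e^{i(t-t')\phi}$ over $[0,t]$, is $\sim t \cdot \gamma^p R^{-ps} \cdot (\text{combinatorial count})$, with negligible time oscillation because the phase is small on the resonant set. The $H^s$-norm of this output at low frequency is then $\gtrsim t \gamma^p R^{-ps} N^{c}$ for a suitable power $c$ counting the resonant tuples. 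Choosing $R$ large, then $N$ large, then $\gamma$ small, and $t = t_\eps$ tiny, one arranges simultaneously $\gamma N^{1/2}\to 0$ (small data), $t\to 0$, and $t\gamma^p R^{-ps} N^c \to \infty$ (inflation); the condition on $s$ in each of the three regimes is exactly what makes this system of inequalities solvable, and the endpoint case $d=1,p=3,s=-1/2$ requires the extra logarithmic room that the resonant count affords.

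The remaining work is the \textbf{error estimate}, which I expect to be the main obstacle. One must show that the full Duhamel solution stays within, say, twice the size of the first iterate on $[0,t_\eps]$, i.e.\ that the higher Picard iterates $u^{(k)}$, $k\ge 2$, are negligible. The natural tool is a fixed-point argument in a space like $C([0,t_\eps]; H^\sigma)$ for an auxiliary regularity $\sigma$ large enough that the bump data sits there (it does, being frequency-localized), combined with the crude bound $\|W(t)\|_{H^\sigma\to H^\sigma}\lesssim e^{ct|\xi|^2}$-type control — but here one must be careful because $\partial_t^2-\Delta+\Delta^2$ has a genuinely dispersive (oscillatory, bounded) propagator in the high-frequency regime $|\xi|\gtrsim 1$ where we work, so $W(t)$ is actually bounded on every $H^\sigma$ uniformly for $t$ in compact sets. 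Then each application of the nonlinearity gains a factor of roughly $t_\eps$ times a power of $R$ (from the $\Delta$ and from $p$-linear frequency-localized products via Bernstein), and for $t_\eps$ chosen small enough relative to that power of $R$ the Neumann series converges and the tail is controlled by, e.g., $t_\eps^2$ times powers of $R,N,\gamma$, which is made $\ll$ the main term by a final adjustment of $t_\eps$. Packaging this cleanly — choosing the five parameters $R,N,\gamma,\sigma,t_\eps$ in the right order so that all constraints (small data, inflation of the main term, domination of the main term over the error, $t_\eps<\eps$) hold at once — is the delicate bookkeeping that constitutes the heart of the proof, and is where the precise thresholds on $s$ are pinned down.
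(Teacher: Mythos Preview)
Your overall strategy---expand in Picard iterates, show the $p$-th iterate dominates and is large in $H^s$ while the data is small, control the tail---is correct and is exactly the scheme the paper follows. Three points of execution, however, differ from the paper, and two of them are genuine gaps.

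\textbf{The auxiliary space.} The paper first reduces \eqref{gB} to the Schr\"odinger-type equation \eqref{rSch} via $v=u-i(1-\Delta)^{-1}\partial_t u$; the nonlinearity then carries only the bounded multiplier $\omega(\xi)=|\xi|^2/(1+|\xi|^2)$. It then runs the fixed point in $D=L^2\cap\mathcal{F}L^1$, not in $H^\sigma$. This matters quantitatively: for data with $\widehat{v_0}$ supported on $O(1)$-cubes at frequency $\sim N$ one has $\|v_0\|_{\mathcal{F}L^1}\sim\|v_0\|_{L^2}$, whereas $\|v_0\|_{H^\sigma}\sim N^{\sigma}\|v_0\|_{L^2}$. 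Since the local existence time scales like $\|v_0\|_D^{-(p-1)}$, working in $H^\sigma$ with $\sigma>d/2$ forces $t_\eps$ smaller by $N^{-(p-1)\sigma}$, and one checks that the three constraints (small data, modulation bound, inflation of the main term) then fail to close for the stated range of $s$. The $\mathcal{F}L^1$ choice, together with the support bound $\supp\mathcal{F}[\mathcal{I}_n[v_0]]\subset\{|\xi|\lesssim nN\}$, is what makes Lemmas~\ref{lem:bound} and~\ref{lem:boundHs} go through with no high-frequency loss. Your remark about Bernstein points in the right direction, but ``$t_\eps$ times a power of $R$'' per iteration is too lossy; the correct gain is $t_\eps\|v_0\|_{\mathcal{F}L^1}^{p-1}$ with no extra $R$.

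\textbf{Reduction to general data.} You cannot ``solve the equation with that data on a short time interval'' when $(u_0,u_1)$ is merely in $H^s\times H^{s-2}$ in the ill-posed range---there is no flow to invoke. The paper instead approximates the target by $\varphi\in\mathcal{S}$, proves norm inflation at $\varphi$ using the well-posedness in $D$, and formalizes this as Proposition~\ref{prop:gIP}: under condition (A), norm inflation at zero for \eqref{eq:abst1} plus the bound $t_k\|u_k(0)\|_D^{p-1}\ll 1$ implies norm inflation at every point for \eqref{eq:abst2}. The ``bounded'' part $\mathcal{M}$ (here the mass term $-\tfrac12(v-\overline{v})$) is absorbed by this abstract step.

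\textbf{The $p=2$ case.} For $p=2$ with $d=1,2$ and $s_c\le s<-\tfrac12$, the generic modulation bound $|M|\lesssim N^{2}$ that your construction would use yields only $s<s_c$. The paper needs a second, separate construction: data supported on thin boxes $\widetilde{Q}_A^{(1)}(N)$ with $A\le 1$, exploiting $\bigl||\xi_1|^2-|\xi_2|^2\bigr|=|(\xi_1+\xi_2)\cdot(\xi_1-\xi_2)|\lesssim NA$ on the resonant set together with the extra smallness $\omega(\xi)\sim|\xi|^2$ at low output frequency. This is the content of the second half of \S3 (and, abstractly, Corollary~\ref{cor:disIP} with $\alpha=2$, $\beta=1$). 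Your sketch does not mention this refinement, and without it the range $s<-\tfrac12$ for $p=2$ is not reached.

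A minor difference: the paper varies the cube \emph{size} $A\in[1,N]$ (with the factor $F_s(A)$ in Lemma~\ref{lem:boundHs}) rather than the \emph{number} of cubes; this is how the endpoint $d=1$, $p=3$, $s=-\tfrac12$ is reached, via $F_{-1/2}(A)=(\log\langle A\rangle)^{1/2}$.
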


Theorem \ref{IP} is an improvement of the result by Geba et al. in terms of the property of the flow map and the range of $s$.

We set $v := u - i (1-\Delta)^{-1} \partial _t u$.
Since $u$ is real valued, \eqref{gB} is equivalent to
\begin{equation} \label{rSch}
\begin{aligned}
& i \partial _t v - \Delta v = -\frac{1}{2} (v-\overline{v}) + \frac{1}{2^p} \omega (\sqrt{-\Delta}) (v+\overline{v})^p , \\
& v(0,x) = v_0(x),
\end{aligned}
\end{equation}
where $\omega (\xi) = \frac{|\xi|^2}{1+|\xi|^2}$ and $v_0 = u_0- i (1-\Delta)^{-1} u_1$.
The restriction to real-valued functions is not essential, but assumed here for simplicity.
When $u$ is complex-valued, \eqref{gB} is reduced to a system of nonlinear Schr\"{o}dinger equations, and the same ill-posedness result holds (see Remark \ref{rmk:complex}).

Since $\omega (-\sqrt{\Delta})$ is bounded in $L^2(\R^d)$, we can neglect it and reduce \eqref{rSch} to the Schr\"{o}dinger equation with the power type nonlinearity.
Hence, the same calculation as the nonlinear Schr\"{o}dinger equation yields well-posedness of \eqref{gB}.
In contrast, from $\omega (\xi) \sim |\xi|$ for $|\xi| <1$, \eqref{gB} with $d=1$ and $p=2$ is well-posed in $H^{-\frac{1}{2}}(\R)$, although Kishimoto and Tsugawa \cite{KT} proved that well-posedness in $H^s(\R)$ for the nonlinear Schr\"{o}dinger equation with $|u|^2$ holds if and only if $s \ge -\frac{1}{4}$.

Iwabuchi and Ogawa \cite{IO} developed a method for proving ill-posedness of evolution equations using the modulation space.
This method is a refinement of previous work by Bejenaru and Tao \cite{BT}.
Recently, many authors have used this method to prove ill-posedness for nonlinear Schr\"{o}dinger equations \cite{Kis, IU, OW, Oh}, nonlinear Dirac equations \cite{MacOka1, MacOka2, MacOka3, HMO}, and nonlinear half wave equations \cite{CP}.

Norm inflation at general initial data was first studied by Xia \cite{Xia} in the context of the nonlinear wave equations on $\mathbb{T}^3$, which is based on the ODE approach.
Using the Fourier analytic approach, Oh \cite{Oh} proved norm inflation with general initial data for the cubic nonlinear Schr\"{o}dinger equation.
Following this approach, we show ill-posedness of \eqref{gB}.
Using well-posedness in the modulation space $M^0_{2,1}(\R^d)$ or $L^2(\R^d) \cap \mathcal{F}L^1(\R^d)$, we expand the solution $u$ to the series of the recurrence sequence $\{ \mathcal{I}_n[u_0] \}$ (see \eqref{iteration} below).
Furthermore, we can estimate the Sobolev norm of each $\mathcal{I}_n[u_0]$.
To obtain norm inflation with general initial data, Oh \cite{Oh} instead defined $\mathcal{I}_n[u_0]$ directly via the power series expansion indexed by trees.
In contrast, using the well-posedness in $M^0_{2,1}(\R^d)$ or $L^2(\R^d) \cap \mathcal{F}L^1(\R^d)$, we observe that norm inflation with general initial data follows from that with zero initial data.
As a corollary, we show that Bejenaru and Tao's argument implies that the flow map is discontinuous everywhere in a neighborhood of zero.
Since some arguments do not need to restrict the Boussinesq equation, we consider a more general setting in \S 2.

We also consider the Cauchy problem for the Kawahara equation
\begin{equation} \label{Kawahara}
\partial _t u - \partial _x^5 u+ \mathfrak{b} \partial_x^3u+\partial_x(u^2)=0, \quad u(0,x)=u_0(x),
\end{equation}
where $\mathfrak{b} =-1,0,$ or $1$.
This equation arises in modeling capillary-gravity waves on a shallow layer and magneto-sound propagation in plasmas (\cite{Kaw}).
Many authors have studied well-posedness of \eqref{Kawahara} (see \cite{CDT, WCD, CLMW, CG, Kat11} and references therein).
Kato \cite{Kat11} proved that \eqref{Kawahara} is well-posed in $H^s(\R)$ with $s \ge -2$ and ill-posed in $H^s(\R)$ with $s<-2$.
The ill-posedness means that the flow map is discontinuous at zero.
We observe discontinuity everywhere in $H^s(\R)$ with $s<-2$ for the flow map.

\begin{thm} \label{IP:Kaw}
The flow map of the Cauchy problem \eqref{Kawahara} is discontinuous everywhere in $H^s(\R)$ with $s<-2$.
More precisely, we obtain norm inflation with general initial data.
\end{thm}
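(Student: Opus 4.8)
The plan is to follow the same strategy used for the generalized Boussinesq equation, transplanting it to the Kawahara setting. First I would recast \eqref{Kawahara} as an integral equation via the Duhamel formula with the linear propagator $e^{t(\partial_x^5 - \mathfrak{b}\partial_x^3)}$, and introduce the recurrence sequence $\{\mathcal{I}_n[u_0]\}$ obtained by iterating the Duhamel map, exactly as indicated in \S 2 for the abstract setting. The key observation, already used for \eqref{gB}, is that since \eqref{Kawahara} is well-posed in a suitable space adapted to the fifth-order dispersion — here $L^2(\R) \cap \mathcal{F}L^1(\R)$ or a modulation-type space on which the quadratic nonlinearity $\partial_x(u^2)$ is well-behaved — the solution admits a convergent expansion $u = \sum_{n\ge 1}\mathcal{I}_n[u_0]$ for data small in that stronger norm, and one may reduce norm inflation with general initial data to norm inflation with \emph{zero} initial data by perturbing around $u_0$: writing the data as $u_0 + \phi$ with $\phi$ a small high-frequency bump, the contribution of $u_0$ stays controlled in the strong norm while the bump drives the inflation.

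Next I would construct the inflating datum. Following the zero-data norm-inflation mechanism, take $\phi$ to be a superposition of two (or more) modulated characteristic functions at frequency $\sim M$ on small intervals, with amplitudes tuned so that $\|\phi\|_{H^s}$ is as small as we like (using $s < -2$), while the first nonlinear iterate $\mathcal{I}_2[\phi]$ — which involves $\partial_x$ of a product of two such bumps — produces a resonant, low-frequency output whose $H^s$ norm is enormous. The gain here comes from the derivative $\partial_x$ in the nonlinearity together with the large frequency parameter $M$ and the near-cancellation of the dispersion relation $\mu(\xi) = \xi^5 + \mathfrak{b}\xi^3$ on the relevant frequency configuration: for output frequency near $0$ one has $\mu(\xi_1) + \mu(\xi_2) - \mu(\xi_1+\xi_2)$ small, so the time integral in $\mathcal{I}_2$ is essentially $t$ times the spatial product, with no oscillatory decay. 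Choosing $t_\eps$ appropriately small then yields $\|\mathcal{I}_2[\phi](t_\eps)\|_{H^s} > \eps^{-1}$ while all higher iterates $\mathcal{I}_n[\phi]$, $n\ge 3$, and the cross terms with $u_0$ remain negligible.

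The main obstacle I expect is controlling the higher-order terms and the $u_0$-cross terms simultaneously in the right topology: one needs that the expansion $u = \sum_n \mathcal{I}_n[u_0+\phi]$ genuinely converges in $H^s$ (via the auxiliary well-posedness space) \emph{and} that $\sum_{n\ge 3}\mathcal{I}_n[\phi]$ plus all terms mixing $u_0$ with $\phi$ are $o(\eps^{-1})$, so that the single bad term $\mathcal{I}_2[\phi]$ survives. This requires a careful bookkeeping of how the norm of $\mathcal{I}_n$ scales in $M$, $t$, and the amplitude: the quadratic nonlinearity makes the combinatorics lighter than in the Boussinesq $p$-th power case, but the fifth-order derivative loss must be absorbed by the smoothing built into the iteration space, and one must check that $u_0$ being merely in $H^s$ (not in the stronger space) does not obstruct this — which is precisely where the reduction in \S 2 from general data to zero data does its work. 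Once those estimates are in place, the statement follows verbatim as in Theorem \ref{IP}, giving discontinuity of the flow map everywhere in $H^s(\R)$ for $s < -2$.
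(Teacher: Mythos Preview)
Your overall architecture (Duhamel expansion, show $\mathcal{I}_2$ dominates, use high-frequency bump data with resonant low-frequency output) is correct, but there is a genuine gap at the point where you invoke well-posedness in $L^2(\R)\cap\mathcal{F}L^1(\R)$ and then appeal to the \S 2 reduction. Condition (A) is \emph{not} satisfied for the Kawahara equation with $D=L^2\cap\mathcal{F}L^1$ and $S_T=L^\infty_T D$: the nonlinearity is $\partial_x(u^2)$, and the extra $\partial_x$ destroys the algebra estimate $\|\mathcal{N}_2(u_1,u_2)\|_{S_T}\le C_\ast T\|u_1\|_{S_T}\|u_2\|_{S_T}$. (There is no ``fifth-order derivative loss'' here---the propagator $e^{t(\partial_x^5-\mathfrak{b}\partial_x^3)}$ is unitary on both $L^2$ and $\mathcal{F}L^1$---but the single $\partial_x$ is already fatal for a naive $\mathcal{F}L^1$ iteration.) Consequently Proposition~\ref{prop:gIP} as stated cannot be applied, and your plan to ``let the reduction in \S 2 do the work'' for general $u_0\in H^s$ breaks down exactly there.

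The paper circumvents this not by finding a better iteration space, but by exploiting the specific structure of the data. For the bump $u_0$ with $\widehat{u_0}$ supported in $\{|\xi|\le 2N\}$, one has $\supp\mathcal{F}[\mathcal{I}_n[u_0](t)]\subset[-2nN,2nN]$, so the derivative in the nonlinearity costs only a factor $2nN$ at the $n$-th step. This yields $\|\mathcal{I}_n[u_0](t)\|_{\mathcal{F}L^1}\le (C_1 N t\|u_0\|_{\mathcal{F}L^1})^{n-1}\|u_0\|_{\mathcal{F}L^1}$ (note the extra $N$), and the series converges provided $TN\|u_0\|_{\mathcal{F}L^1}\ll 1$---a strictly stronger smallness condition than in \S 2. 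One then checks directly that the modulation bound $|M(\xi_1,\xi_2)|\lesssim N^4$ and the choice $t=N^{s-2}$ make $\mathcal{I}_2$ the dominant term. For general data $\phi\in H^s$, one cannot simply add $\phi$ to the bump (as $\phi$ need not lie in $L^2\cap\mathcal{F}L^1$, and even a Schwartz approximant $\varphi$ would ruin the Fourier support). Instead the paper first approximates $\phi$ by $\varphi\in\mathcal{S}$, then replaces $\varphi$ by its frequency truncation $\varphi_N=\mathcal{F}^{-1}[\bm{1}_{[-N,N]}\widehat{\varphi}]$, so that $v_0=u_0+\varphi_N$ again has Fourier support in $\{|\xi|\le 2N\}$ and the same $N$-dependent estimates go through for $\mathcal{I}_n[v_0]$ and for the differences $\mathcal{I}_n[u_0]-\mathcal{I}_n[v_0]$. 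This frequency-truncation step is the missing idea in your proposal.
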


We point out that Iwabuchi and Ogawa's argument is applicable even if a nonlinear component has a derivative.
Since $\omega (-\sqrt{\Delta})$ is bounded in $L^2(\R^d)$, we can reduce \eqref{rSch} to the Schr\"{o}dinger equation with a power-type nonlinearity.
Hence, well-posedness in $M^0_{2,1}(\R^d)$ or $L^2(\R^d) \cap \mathcal{F}L^1(\R^d)$ follows easily from the fact that they are a Banach algebra.
However, it is difficult to obtain well-posedness in $M^0_{2,1}(\R^d)$ or $L^2(\R^d) \cap \mathcal{F}L^1(\R^d)$ of \eqref{Kawahara} using only the product estimates in these spaces because of the presence of derivative.
To avoid discussing well-posedness, we can estimate each $\mathcal{I}_n[u_0]$ with specific initial data, which ensures the expansion.
As a result of the expansion, we obtain norm inflation with general initial data.

\begin{rmk}
For the periodic setting, through minor modifications (for example, we replace the Fourier transform by the Fourier coefficient), the proof remains valid.
Namely, under the same assumption in Theorem \ref{IP} (or Theorem \ref{IP:Kawahara}), we obtain norm inflation with general initial data in $H^s(\mathbb{T}^d) \times H^{s-2}(\mathbb{T}^d)$ (or $H^s(\mathbb{T})$) with $\mathbb{T}:= \R /\mathbb{Z}$ for \eqref{gB} (or \eqref{Kawahara}, respectively).
\end{rmk}

The remainder of this paper is organized as follows.
In \S 2, we observe the norm inflation with general initial data follows from that with zero initial data.
Furthermore, we show ill-posedness for a dispersive equation in $H^s(\R^d)$.
In \S 3, we prove Theorem \ref{IP} using the argument in \S 2.
In \S \ref{IP:Kawahara}, we prove norm inflation with general initial data for the Kawahara equation.

At the end of this section, we summarize the notations used throughout this paper.
For $1\le p \le \infty$, a Banach space $D$, and $T>0$, we denote by $L_T^p D$ the $D$-valued $L^p$ space on $[0,T]$ with the norm $\| f \|_{L_T^pD} := \| \| f \|_D \|_{L^p([0,T])}$. 
Let $\mathcal{S}(\R^d)$ be the rapidly decaying function space on $\R^d$.
We define by $\mathcal{F}[f]$ or $\widehat{f}$ the Fourier transform of $f$.
We use the inhomogeneous Sobolev spaces $H^s(\R^d)$ with the norm $\| f \|_{H^s}:= \| \lr{\cdot}^s \widehat{f} \| _{L^2}$,
where $\lr{\xi} := (1+ | \xi |^2)^{\frac{1}{2}}$.
For $1 \le p \le \infty$, we denote by $\mathcal{F}L^p (\R^d)$ the Banach space $\{ \mathcal{F} [f]: f \in L^p(\R^d) \}$ with the norm $\| f \| _{\mathcal{F}L^p} := \| \mathcal{F} f \|_{L^p}$.

We also use $\bm{1} _A$ to stand for the characteristic function of a set $A\subset \R^d$.
We use the shorthand $X \lesssim Y$ to denote the estimate $X \leq CY$ with some constant $C>0$, and $X \ll Y$ to denote the estimate $X \leq C^{-1} Y$ for some large constant $C>0$.
The notation $X \sim Y$ stands for $X \lesssim Y$ and $Y \lesssim X$.

\section{Abstract ill-posedness theory}

In this section, we deduce norm inflation with general initial data from that with zero initial data.
Since this argument does not require restriction of the Boussinesq equation, we consider the abstract (integral) equation with a $p$-linear nonlinearity for some integer $p \ge 2$:
\begin{align}
& u = \mathcal{L}(f) + \mathcal{N}_p (u, \dots , u) . \label{eq:abst1}
\end{align}
We also consider the abstract equation with a mass (and more generally bounded) component.
\begin{equation}
v = \mathcal{L}(g) + \mathcal{M}(v) + \mathcal{N}_p (v, \dots , v). \label{eq:abst2}
\end{equation}
Here, the initial data $f$, $g$ take values in some data space $D$, while the solutions $u$, $v$ take values in some solution space $S$.
The linear operators $\mathcal{L}: D \rightarrow S$ and $\mathcal{M}: S \rightarrow S$, and the $p$-linear operator $\mathcal{N}_p:S \times \dots \times S \rightarrow S$ are all densely defined.
For example, the nonlinear Schr\"{o}dinger equation $i\partial _t u- \Delta u = u^p$ is written as
\[
u (t) = e^{-i t \Delta} u_0 -i\int _0^t e^{-i (t-t') \Delta} (u^p)(t') dt'.
\]
In this case,
\[
\mathcal{L}(u_0)(t) = e^{-i t \Delta} u_0, \quad
\mathcal{N}_p(u_1, \dots , u_p) (t)= -i \int _0^t e^{-i (t-t') \Delta} (u_1 \cdots u_p)(t') dt'.
\]

To insure the local in time well-posedness in $D$ of \eqref{eq:abst1} or \eqref{eq:abst2}, we assume the following condition (A):
\begin{itemize}
\item Let $D$ and $B$ be Banach spaces with $\mathcal{S}(\R^d) \subset D \subset B \subset \mathcal{S}'(\R ^d)$.
\item There exist a suitable function space $S_T \subset C([0,T]; D)$, $\delta \in (0,1]$, and some positive constant $C_{\ast}>1$ satisfying
\begin{align*}
& \| \mathcal{L}(f) \| _{S_T} \le C_{\ast} \| f \| _{D}, \quad \| \mathcal{L}(f) \|_{L_T^{\infty} B} \le C_{\ast} \| f \| _B, \\
& \| \mathcal{M}(u) \| _{S_T} \le C_{\ast} T^{\delta} \| u \|_{S_T}, \quad \| \mathcal{M}(u) \| _{L_T^{\infty} B} \le C_{\ast} T^{\delta} \| u \|_{B}, \\
& \left\| \mathcal{N}_p (u_1, \dots , u_p) \right\| _{S_T} \le C_{\ast} T^{\delta} \prod _{k=1}^p \| u_k \| _{S_T}
\end{align*}
for $0<T<1$.
\end{itemize}
Under this assumption, well-posedness of \eqref{eq:abst1} or \eqref{eq:abst2} follows from the standard contraction mapping argument.
For example, we can find a solution $v$ in $\{ v \in S_T: \| v \| _{S_T} \le 2 C_{\ast} \| g \|_D \}$ to \eqref{eq:abst2} provided that
\[
T < \left\{ p2^pC_{\ast}^p (\| g \| _D^{p-1}+1) \right\} ^{-\frac{1}{\delta}} .
\]

\begin{dfn}
We say that norm inflation of \eqref{eq:abst2} with general initial data occurs if for any $\eps >0$ and any $\phi \in B$, there exist a solution $v_{\eps}$ to \eqref{eq:abst2} with initial data $v_{\eps}(0) \in \mathcal{S} (\R^d)$ and a time $t_{\eps} \in \R$ such that
\[
\| v_{\eps}(0) - \phi \| _B < \eps, \quad 0<t_{\eps} <\eps , \quad \| v_{\eps}(t_{\eps}) \|_B > \eps ^{-1}.
\]
\end{dfn}
We note that the norm inflation of \eqref{eq:abst2} with general initial data follows from the norm inflation of \eqref{eq:abst1} at zero.

\begin{prop} \label{prop:gIP}
Assume (A) and that for any $k \in \mathbb{N}$, there exists a solution $u_k$ to \eqref{eq:abst1} with initial data $u_k(0) \in \mathcal{S}(\R^d)$ and $t_k \in \R$ such that
\[
10^p C_{\ast}^p t_k^{\delta} \| u_k(0) \| _D^{p-1}<1, \quad
\| u_k (0) \| _B < \frac{1}{k}, \quad
\| u_k (t_k) \| _B >k.
\]
Norm inflation of \eqref{eq:abst2} with general initial data occurs.
\end{prop}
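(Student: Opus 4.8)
The strategy is to produce the inflating solution for a general datum $\phi$ as a small perturbation of the solutions $u_k$ of \eqref{eq:abst1} supplied by the hypothesis. Since $\mathcal S(\R^d)$ is dense in $B$, fix $\psi\in\mathcal S(\R^d)$ with $\|\psi-\phi\|_B<\eps/2$; it then suffices to find a solution $v_\eps$ of \eqref{eq:abst2} with $v_\eps(0)\in\mathcal S(\R^d)$, $\|v_\eps(0)-\psi\|_B<\eps/2$, $0<t_\eps<\eps$, and $\|v_\eps(t_\eps)\|_B>\eps^{-1}$. For $k\in\mathbb N$ to be chosen large, I take as datum $g_k:=\psi+u_k(0)\in\mathcal S(\R^d)$ and denote by $v_k$ the corresponding solution of \eqref{eq:abst2}.

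Before estimating anything I would record two preliminary facts. First, $\|u_k(0)\|_D\to\infty$, and hence $t_k\to0$ by the time condition: since $D\subset B$ continuously, $\|u_k(t_k)\|_D\gtrsim\|u_k(t_k)\|_B>k$, so if $\|u_k(0)\|_D$ stayed bounded, the time condition and (A) would keep $u_k$ bounded in $S_{t_k}$ (hence in $C([0,t_k];D)$), a contradiction. (In the applications to \eqref{gB} and \eqref{Kawahara}, $u_k(0)$ is built explicitly with $\|u_k(0)\|_D\to\infty$ and $t_k\to0$, so this step may also be taken for granted.) Second, for $k$ so large that $\|u_k(0)\|_D\ge\max(\|\psi\|_D,1)$ one has $\|g_k\|_D\le2\|u_k(0)\|_D$, and a routine comparison of constants shows that $10^pC_{\ast}^p t_k^{\delta}\|u_k(0)\|_D^{p-1}<1$ still places $[0,t_k]$ inside the well-posedness interval of \eqref{eq:abst2} for $g_k$ and of \eqref{eq:abst1} for $u_k(0)$ (it reduces to inequalities like $p(2/5)^p<1$, valid for $p\ge2$). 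Hence $v_k,u_k$ exist on $[0,t_k]$ with $\|v_k\|_{S_{t_k}}\le2C_{\ast}\|g_k\|_D\le4C_{\ast}\|u_k(0)\|_D$ and $\|u_k\|_{S_{t_k}}\le2C_{\ast}\|u_k(0)\|_D$.

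The heart of the proof is a uniform-in-$k$ bound on $z_k:=v_k-u_k$ in $S_{t_k}$. Subtracting the integral equations for $v_k$ and $u_k$ (note that \eqref{eq:abst1} has no mass term) and expanding the $p$-linear term by the telescoping identity gives
\[
z_k=\mathcal L(\psi)+\mathcal M(v_k)+\sum_{i=1}^{p}\mathcal N_p\big(\underbrace{v_k,\dots,v_k}_{i-1},\,z_k,\,\underbrace{u_k,\dots,u_k}_{p-i}\big).
\]
Taking the $S_{t_k}$-norm, applying (A), and inserting the bounds above, the factor multiplying $\|z_k\|_{S_{t_k}}$ on the right is $\le p4^{p-1}C_{\ast}^{p}t_k^{\delta}\|u_k(0)\|_D^{p-1}<p4^{p-1}10^{-p}<\tfrac12$, so it is absorbed into the left; the remainder is bounded by $2C_{\ast}\|\psi\|_D+2C_{\ast}t_k^{\delta}\|v_k\|_{S_{t_k}}$, whose last term is $\le8C_{\ast}^2 t_k^{\delta}\|u_k(0)\|_D\le8\cdot10^{-p}C_{\ast}^{2-p}\|u_k(0)\|_D^{2-p}\le1$. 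Therefore $\|z_k\|_{S_{t_k}}\le2C_{\ast}\|\psi\|_D+1=:K_\psi$, a constant independent of $k$.

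Using $D\subset B$ once more, $\|z_k(t_k)\|_B\lesssim\|z_k\|_{S_{t_k}}\le K_\psi$, so $\|v_k(t_k)\|_B\ge\|u_k(t_k)\|_B-\|z_k(t_k)\|_B\ge k-CK_\psi$ for a constant $C$ independent of $k$. It remains to fix $k$ so large that, simultaneously, $k-CK_\psi>\eps^{-1}$, $\|u_k(0)\|_B<\eps/2$ (possible since $\|u_k(0)\|_B<1/k$), $t_k<\eps$ (possible since $t_k\to0$), and $\|u_k(0)\|_D\ge\max(\|\psi\|_D,1)$ (possible since $\|u_k(0)\|_D\to\infty$), and to set $v_\eps:=v_k$, $t_\eps:=t_k$; then $v_\eps(0)=g_k\in\mathcal S(\R^d)$, $\|v_\eps(0)-\phi\|_B\le\|u_k(0)\|_B+\|\psi-\phi\|_B<\eps$, $0<t_\eps<\eps$, and $\|v_\eps(t_\eps)\|_B>\eps^{-1}$. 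The main obstacle is precisely this uniform bound on $z_k$: the bump $\psi$ contributes only its fixed (hence bounded) $D$-norm, but $\|u_k(0)\|_D$ and with it $\|v_k\|_{S_{t_k}}$ diverge, so a priori the $p$-linear interaction could amplify $z_k$ out of control; it is the strengthened smallness $10^pC_{\ast}^p t_k^{\delta}\|u_k(0)\|_D^{p-1}<1$ — slightly stronger than bare well-posedness — that forces the self-interaction coefficient below $\tfrac12$ and lets the perturbative estimate close.
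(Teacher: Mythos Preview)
Your proof is correct and follows essentially the same perturbative strategy as the paper: take the datum $\psi+u_k(0)$, use the strengthened time condition $10^pC_\ast^p t_k^\delta\|u_k(0)\|_D^{p-1}<1$ to guarantee well-posedness on $[0,t_k]$ for both equations and to absorb the $p$-linear interaction in the difference estimate, then transfer the inflation of $u_k$ to $v_k$. The only noteworthy variation is that you estimate $z_k=v_k-u_k$ entirely in $S_{t_k}$ and obtain a uniform-in-$k$ bound $\|z_k\|_{S_{t_k}}\le K_\psi$, which lets you conclude directly at the time $t_k$; the paper instead mixes the $L^\infty_{t_k}B$ and $S_{t_k}$ norms, ends up with $\|v_k-u_k\|_{L^\infty_{t_k}B}\le 2C_\ast\|\varphi\|_D+\|v_k\|_{L^\infty_{t_k}B}$, and then extracts an inflating time $t_k'\in(0,t_k]$ from the lower bound on the sup norm. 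Your route is slightly cleaner and avoids that extra step.
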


\begin{proof}
For any $\varphi \in \mathcal{S}(\R^d)$, we take $v_k(0) = u_k(0) + \varphi \in \mathcal{S}(\R^d)$.
For simplicity, we set $u_{0,k}:= u_k(0)$ and $v_{0,k} := u_k(0)+\varphi$.
By the well-posedness of \eqref{eq:abst1} in $D$, we have
\[
k< \| u_k(t_k)\| _B \le \| u_k \| _{L^{\infty}_{t_k}D} \le 2 C_{\ast} \| u_{0,k} \|_D.
\]
Since $\varphi$ is independent of $k$, we have $\| v_{0,k} \|_D \le 2 \| u_{0,k} \|_D$ for sufficiently large $k$.
Therefore, by the assumption, \eqref{eq:abst2} is well-posed in $C([0,t_k];D)$.
Furthermore, we get
\[
\| u_k \| _{S_{t_k}} \le 2C_{\ast} \| u_{0,k} \| _D, \quad
\| v_k \| _{S_{t_k}} \le 2C_{\ast} \| u_{0,k} \| _D, \quad
t_k < (5^p C_{\ast} k^{p-1})^{-\frac{1}{\delta}} .
\]

The difference $v_k-u_k$ satisfies
\[
v_k-u_k = \mathcal{L}(\varphi) + \mathcal{M}(v_k) + \mathcal{N}_p(v_k,\dots ,v_k)-\mathcal{N}_p(u_k,\dots ,u_k) .
\]
Therefore,
\begin{align*}
& \| v_k-u_k \| _{L_{t_k}^{\infty}B} \\
& \le \| \mathcal{L}(\varphi ) \| _{S_{t_k}} + \| \mathcal{M}(v_k) \| _{L_{t_k}^{\infty}B} + \| \mathcal{N}_p(v_k,\dots ,v_k)-\mathcal{N}_p(u_k,\dots ,u_k) \| _{S_{t_k}} \\
& \le C_{\ast} \| \varphi \|_D + C_{\ast} t_k^{\delta} \| v_k \|_{L_{t_k}^{\infty}B} + p C_{\ast} t_k^{\delta} (\| v_k \|_{S_{t_k}} + \| u_k \|_{S_{t_k}})^{p-1} \| v_k - u_k \|_{S_{t_k}} \\
& \le C_{\ast} \| \varphi \|_D + \frac{1}{2} \| v_k \|_{L_{t_k}^{\infty}B} + p C_{\ast} t_k^{\delta} (4C_{\ast} \| u_{0,k} \|_D) ^{p-1} \| v_k - u_k \|_{S_{t_k}} \\
& \le C_{\ast} \| \varphi \|_D + \frac{1}{2} \| v_k \|_{L_{t_k}^{\infty}B} + \frac{1}{2} \| v_k - u_k \|_{S_{t_k}},
\end{align*}
and thus,
\[
\| v_k - u_k \|_{L_{t_k}^{\infty}B} \le 2C_{\ast} \| \varphi \|_D+ \| v_k \|_{L_{t_k}^{\infty}B} .
\]
From the triangle inequality
\[
\| v_k-u_k \|_{L_{t_k}^{\infty}B}
\ge \| u_k \| _{L_{t_k}^{\infty}B} - \| v_k \| _{L_{t_k}^{\infty}B},
\]
it follows that
\[
\| v_k \| _{L_{t_k}^{\infty}B} 
\ge \frac{1}{2} \| u_k \| _{L_{t_k}^{\infty}B} -C_{\ast} \| \varphi \|_D
> \frac{k}{4} .
\]
for sufficiently large $k$.
Thus, we can find $t_k' \in (0, t_k]$ satisfying
\[
\| v_k (t_k') \| _B > \frac{k}{8} .
\]

For completeness, we deduce norm inflation of \eqref{eq:abst2} with general initial data from the above argument.
Let $\phi \in B$.
For any $k \in \mathbb{N}$, there exists $\varphi_k \in \mathcal{S}(\R^d)$ such that
\[
\| \phi - \varphi_k \|_B < \frac{1}{k}.
\]
By the above argument, there exists a solution $v_k$ to \eqref{eq:abst2} such that
\[
\| v_k(0) - \varphi_k \|_B < \frac{1}{k}, \quad \| v_k (t_k') \| _B > \frac{k}{8}
\]
for sufficiently large $k$.
Hence, for given $\eps >0$, taking $v_{\eps} = v_{k}$ and $t_{\eps} = t_k'$ for some $k > \max \{ 8\eps^{-1}, (5^pC_{\ast} \eps^{\delta})^{-\frac{1}{p-1}} \}$, we obtain
\[
\| v_{\eps}(0) - \phi \| _B \le \frac{2}{k} < \eps , \quad
0< t_{\eps} < (5^p C_{\ast} k^{p-1})^{-\frac{1}{\delta}} < \eps , \quad
\| v_{\eps} (t_{\eps}) \| _B > \frac{k}{8} > \eps^{-1} ,
\]
concluding the proof.
\end{proof}

As a corollary, we get that the flow map is discontinuous everywhere in a neighborhood of zero if that is discontinuous at zero.

\begin{cor} \label{cor:gIP}
Assume (A) and that for any $k \in \mathbb{N}$, there exist $C>0$ and a solution $u_k$ to \eqref{eq:abst1} with initial data $u_k(0) \in \mathcal{S}(\R^d)$  such that
\[
10^p C_{\ast}^p \| u_k(0) \| _D^{p-1}<1, \quad
\| u_k (0) \| _B < \frac{1}{k}, \quad
\| u_k \| _{L_T^{\infty}B} > C
\]
for $0<T\ll 1$.
Then, the flow map of \eqref{eq:abst2} is discontinuous everywhere in $\{ u_0 \in B : \| u_0 \|_B < \frac{C}{2 C_{\ast}} \}$.
\end{cor}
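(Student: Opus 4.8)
The plan is to run the proof of Proposition~\ref{prop:gIP} with one essential modification. Here we no longer have norm inflation to $+\infty$ but only the uniform lower bound $C$, so comparing the perturbed solution with $u_k$ (the solution of \eqref{eq:abst1} with datum $u_k(0)$), as in Proposition~\ref{prop:gIP}, is too lossy: the reference datum would produce a term of size $\sim C_*\|\varphi\|_D$ that cannot be absorbed. Instead I would compare the perturbed solution with the solution of \eqref{eq:abst2} issued from the reference datum \emph{itself}, so that the reference part cancels exactly and the full lower bound $C$ is inherited by the difference.

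Fix $\varphi\in\mathcal{S}(\R^d)$; the general $\phi\in B$ with $\|\phi\|_B<\frac{C}{2C_*}$ will be handled at the end by density (choosing the index $k$ only after $\varphi$ has been frozen, exactly as in the last paragraph of the proof of Proposition~\ref{prop:gIP}, so that $\|\varphi\|_D$ is a constant throughout). For $k\in\mathbb{N}$ let $v_k$ solve \eqref{eq:abst2} with datum $\varphi+u_k(0)$, let $u^\varphi$ solve \eqref{eq:abst2} with datum $\varphi$, and let $u_k$ solve \eqref{eq:abst1} with datum $u_k(0)$. By the first hypothesis $\|u_k(0)\|_D$ is bounded uniformly in $k$, hence so is $\|\varphi+u_k(0)\|_D$; by (A) there is therefore $T_0=T_0(\varphi)>0$, independent of $k$, on which all three solutions exist, with the uniform bounds $\|v_k\|_{S_{T_0}}\le 2C_*\|\varphi+u_k(0)\|_D$, $\|u^\varphi\|_{S_{T_0}}\le 2C_*\|\varphi\|_D$, $\|u_k\|_{S_{T_0}}\le 2C_*\|u_k(0)\|_D$. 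Put $w_k:=v_k-u^\varphi$. Subtracting the two integral equations for \eqref{eq:abst2} (using that $\mathcal L$ and $\mathcal M$ are linear), expanding $\mathcal N_p(v_k,\dots,v_k)-\mathcal N_p(u^\varphi,\dots,u^\varphi)$ by multilinearity around $u^\varphi$ and isolating the purely-$w_k$ term gives
\[
w_k=\mathcal L(u_k(0))+\mathcal N_p(w_k,\dots,w_k)+\mathcal M(w_k)+R_k,
\]
where $R_k$ is a finite sum of $\mathcal N_p$-terms, each carrying at least one factor $u^\varphi$ and at least one factor $w_k$. Since $u_k=\mathcal L(u_k(0))+\mathcal N_p(u_k,\dots,u_k)$, the common linear term cancels and
\[
w_k-u_k=\mathcal M(w_k)+\bigl(\mathcal N_p(w_k,\dots,w_k)-\mathcal N_p(u_k,\dots,u_k)\bigr)+R_k .
\]

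Estimating this identity in $S_{T_0}$ by (A), and using the uniform-in-$k$ bounds on $\|w_k\|_{S_{T_0}}\le\|v_k\|_{S_{T_0}}+\|u^\varphi\|_{S_{T_0}}$, $\|u_k\|_{S_{T_0}}$, $\|u^\varphi\|_{S_{T_0}}$ recorded above, every term on the right carries a factor $T_0^\delta$; absorbing into the left side the term involving $\|w_k-u_k\|_{S_{T_0}}$ (legitimate once $T_0$ is small enough, uniformly in $k$) gives $\|w_k-u_k\|_{S_{T_0}}\lesssim T_0^\delta$ with an implicit constant depending on $\varphi$ but not on $k$. Now shrink $T_0$ a last time to a value $T=T(\varphi)>0$, still independent of $k$, so that this error is $<\frac{C}{2}$ and, in addition, $\|u_k\|_{L^\infty_T B}>C$ (possible since the hypothesis holds for all $0<T\ll1$). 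Using $\|\cdot\|_{L^\infty_T B}\lesssim\|\cdot\|_{S_T}$ we conclude
\[
\|v_k-u^\varphi\|_{L^\infty_T B}=\|w_k\|_{L^\infty_T B}\ge\|u_k\|_{L^\infty_T B}-\|w_k-u_k\|_{L^\infty_T B}>C-\frac{C}{2}=\frac{C}{2}
\]
for every $k$. Since $v_k(0)=\varphi+u_k(0)\to\varphi$ in $B$ by the second hypothesis while $\|v_k-u^\varphi\|_{L^\infty_T B}>\frac{C}{2}$, the flow map of \eqref{eq:abst2} is discontinuous at $\varphi$; for $\varphi=\phi\in D$ this already proves the claim, and for general $\phi\in B$ the density reduction (choosing $k$ after the smooth approximant $\varphi$ of $\phi$) shows the flow admits no continuous extension to any $B$-neighborhood of $\phi$.

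The main obstacle is the bookkeeping in the displayed identity for $w_k-u_k$: one must check that, once the common linear term $\mathcal L(u_k(0))$ has cancelled, \emph{every} remaining contribution is genuinely $O(T^\delta)$ uniformly in $k$. This is where the first hypothesis $10^pC_*^p\|u_k(0)\|_D^{p-1}<1$ does double duty: it bounds $\|u_k(0)\|_D$, so that $\|u_k\|_{S_T}$ and $\|w_k\|_{S_T}$ are uniformly bounded and the $T^\delta$-factors can be made small independently of $k$, and it gives $u_k$ a lifespan bounded below. The size restriction $\|\varphi\|_B<\frac{C}{2C_*}$ enters only lightly, through $\|\mathcal L(\varphi)\|_{L^\infty_T B}\le C_*\|\varphi\|_B<\frac{C}{2}$, which keeps $u^\varphi$ of size $O(C)$; and the passage to general $\phi$ must be organized so that $\|\varphi\|_D$ is fixed before $k$ is chosen, just as in Proposition~\ref{prop:gIP}.
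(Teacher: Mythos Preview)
Your argument is correct, and it takes a genuinely different route from the one the paper implicitly has in mind. The corollary is stated without proof; the radius $\tfrac{C}{2C_*}$ in the conclusion signals that the intended argument is the direct adaptation of Proposition~\ref{prop:gIP}: compare $v_k$ with $u_k$, bound $\|\mathcal{L}(\varphi)\|_{L^\infty_T B}\le C_*\|\varphi\|_B$ using the second line of (A), and observe that the remaining terms are $O(T^\delta)$ once $\|u_k(0)\|_D$ is uniformly bounded. The surviving $C_*\|\varphi\|_B$ (together with $\|u^\varphi\|_{L^\infty_T B}\le C_*\|\varphi\|_B+O(T^\delta)$) is exactly what forces the restriction $\|\varphi\|_B<\tfrac{C}{2C_*}$. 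Your device of subtracting the \emph{reference solution} $u^\varphi$ first, and only then comparing with $u_k$, kills the $\mathcal{L}(\varphi)$ term entirely: every contribution to $w_k-u_k$ carries a factor $T^\delta$, so your lower bound $\|v_k-u^\varphi\|_{L^\infty_T B}>\tfrac{C}{2}$ holds for \emph{every} $\varphi\in\mathcal{S}(\R^d)$, not only those in the ball. Your final remark that the size restriction ``enters only lightly'' is therefore off target~--- in your own argument it does not enter at all, and what you prove is strictly stronger than the corollary as stated.

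One point deserves a warning. You shrink $T$ to $T(\varphi)$ (depending on $\|\varphi\|_D$) while retaining $\|u_k\|_{L^\infty_T B}>C$, justified by ``the hypothesis holds for all $0<T\ll 1$''. Read literally this is impossible, since $\|u_k\|_{L^\infty_T B}\to\|u_k(0)\|_B<\tfrac{1}{k}$ as $T\to 0^+$. The same tension is present in the direct approach, so this is an ambiguity in the corollary's phrasing rather than a defect of your proof; in every application (Bejenaru--Tao, Kato) the implicit times $T_k$ at which $\|u_k\|_B$ exceeds $C$ tend to zero, and then choosing $k$ after $T(\varphi)$~--- as you indicate~--- closes the argument.
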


The assumption of this corollary is satisfied if ill-posedness is proved by Bejenaru and Tao's argument.
For example, by \cite{Kat11} and this corollary, we obtain that the flow map of the Kawahara equation \eqref{Kawahara} is discontinuous everywhere in a neighborhood of zero in $H^s(\R)$ with $s<-2$.
Here, we take $S_T$ and $D$ as the modified Fourier restriction norm space defined in \cite{Kat11} and $H^{-2}(\R)$, respectively.
In fact, we also obtain norm inflation with general initial data for the Kawahara equation (see \S \ref{IP:Kawahara} below).

We present an application of Proposition \ref{prop:gIP}.
Note that assumption (A) implies that the solution $u$ to \eqref{eq:abst1} is expanded as follows (see \cite{BT}).
\[
u = \sum _{n=1}^{\infty} \mathcal{I}_n[f] \quad \text{in } S_T
\]
where
\begin{equation} \label{iteration}
\mathcal{I}_1[f] := \mathcal{L}(f), \quad
\mathcal{I}_n[f] := \sum _{\substack{n_1, \dots , n_p \in \mathbb{N} \\ n_1+\dots +n_p=n}} \mathcal{N}_p(\mathcal{I}_{n_1}[f], \dots , \mathcal{I}_{n_p}[f]) \quad \text{for }  n \in \mathbb{Z}_{\ge 2} .
\end{equation}
Here, $\mathcal{I}_n[f]$ becomes zero unless $n=l (p-1)+1$ for $l \in \mathbb{N} \cup \{ 0 \}$.
To show that well-posedness by the iteration argument breaks down, we usually prove failure of the bound
\[
\| \mathcal{I}_p [f] \| _{L_T^{\infty} B} \le C \| f \| _B^p .
\]
This implies that the flow map of \eqref{eq:abst1} fails to be $p$-times differentiable.
Thus, if $\mathcal{I}_p[f]$ is a leading term in the expansion, then
\[
\| u \| _{L_T^{\infty} B} \ge \| \mathcal{I}_p [f] \| _{L_T^{\infty} B} - \| \mathcal{I}_1[f] \|_{L_T^{\infty} B} - \sum _{n=2p-1}^{\infty} \| \mathcal{I}_n [f] \| _{L_T^{\infty} B}
\sim \| \mathcal{I}_p [f] \| _{L_T^{\infty} B} ,
\]
implying the discontinuity of the flow map.
This intuitive argument is valid in some cases, as shown below in a specific setting.

To demonstrate this phenomenon, we specify the problem.
Let $\varphi \in C^{\infty} (0,\infty)$ be a real valued function satisfying $\varphi' >0$ and $|\varphi (r)| \sim r^{\alpha}$ for any $r>1$ and some $\alpha >0$.
We abbreviate $\varphi (|\xi|)$ to $\varphi (\xi)$ for $\xi \in \R^d$.
We consider the dispersive equation, namely,
\[
\mathcal{L}(u_0) = e^{i t \varphi (-i \nabla )} u_0 , \quad
\mathcal{N}_p(u_1, \dots ,u_p) = \int _0^t e^{i (t-t') \varphi (-\nabla)} N_p(u_1, \dots , u_p) (t') dt' ,
\]
where $N_p$ is a monomial with respect to $u_1$, $\overline{u}_1$, \dots , $u_p$, $\overline{u}_p$.
Taking $D=L^2(\R^d) \cap \mathcal{F} L^1(\R^d) $ and $S_T = L_T^{\infty} D$ as in \cite{Oh}, we have
\[
\| \mathcal{L}(u_0) \|_{S_T} = \| u_0 \|_D, \quad
\| \mathcal{N}_p(u_1, \dots ,u_p) \| _{S_T} \le T \prod _{j=1}^p \| u_j \| _{S_T}.
\]
Therefore, the condition (A) holds with $\delta =1$ and $B=H^s(\R^d)$ if $s<0$.
We can also take $D$ as the modulation space $M^0_{2,1}(\R^d)$ as in \cite{IO, Kis}.

\begin{rmk} \label{rmk:persistence}
Since $\mathcal{F}L^1 (\R^d)$ is a Banach algebra and $D \subset \mathcal{F} L^1(\R^d)$, from the persistence, the solution $u$ in $\{ u \in S_T: \| u \| _{S_T} \le 2 C_{\ast} \| u_0 \|_D \}$ to \eqref{eq:abst2} exists provided that
\[
T < \left\{ p2^p(\| u_0 \| _{\mathcal{F}L^1}^{p-1}+1) \right\} ^{-1} .
\]
Accordingly, we can replace $10^p C_{\ast}^pt_k^{\delta} \| u_k(0) \| _D^{p-1}<1$ in Proposition \ref{prop:gIP} by
\[
10^p t_k (\| u_k(0) \| _{\mathcal{F}L^1}^{p-1}+k)<1.
\]
The condition $10^p t_k k<1$ is required because we are not certain whether $\| u_k(0) \| _{\mathcal{F}L^1} \ge 1$ holds or not.
\end{rmk}

Since $N_p $ is a monomial with respect to $u_1$, $\overline{u}_1$, \dots , $u_p$, $\overline{u}_p$, we can write
\[
N_p(u, \dots ,u) \sim u^m \overline{u}^{p-m}
\]
for some integer $m \in \{ 0, 1, \dots , p\}$.
We thus focus on the Cauchy problem
\begin{equation} \label{eq:dispersive}
\partial _t u - i \varphi (-\nabla) u = \mathfrak{a} u^{p-m}\overline{u}^m, \quad u(0) =u_0 ,
\end{equation}
where $\mathfrak{a} \in \mathbb{C}$ with $|\mathfrak{a}|=1$.

\begin{prop} \label{prop:disIP}
Let $d \in \mathbb{N}$, $\alpha >0$, $p \in \mathbb{Z}_{\ge 2}$, and $m \in \{ 0,1 \dots , p\}$.
Then, the flow map of \eqref{eq:dispersive} is discontinuous everywhere in $H^s(\R^d)$ if one of the following conditions holds:
\begin{itemize}
\item $p>1+\frac{\alpha}{d}$, $s<0$, and $s<\frac{d}{2}-\frac{\alpha}{p-1}$.
\item $p=1+\frac{\alpha}{d}$ and $s \le -\frac{d}{2}$.
\item $p < 1+\frac{\alpha}{d}$ and $s<\frac{d}{2}-\frac{d+\alpha}{p}$.
\end{itemize}
More precisely, norm inflation with general initial data occurs.
\end{prop}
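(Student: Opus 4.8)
The plan is to reduce everything to Proposition~\ref{prop:gIP}. For \eqref{eq:dispersive} the nonlinear part of the Duhamel formulation carries no bounded component, so $\mathcal M=0$ and \eqref{eq:abst1}, \eqref{eq:abst2} coincide; by Remark~\ref{rmk:persistence} it therefore suffices to construct, for every $k\in\mathbb N$, data $u_{0,k}\in\mathcal S(\R^d)$ and a time $t_k>0$ so that the solution $u_k$ of \eqref{eq:dispersive} with $u_k(0)=u_{0,k}$ satisfies
\[
t_k\bigl(\|u_{0,k}\|_{\mathcal F L^1}^{p-1}+k\bigr)\ll 1,\qquad \|u_{0,k}\|_{H^s}<\frac1k,\qquad \|u_k(t_k)\|_{H^s}>k .
\]
In other words, the whole content is norm inflation with \emph{zero} initial data for \eqref{eq:dispersive}, carried out with this quantitative bookkeeping so that Proposition~\ref{prop:gIP} applies.

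For the construction I take $\widehat{u_{0,k}}=A\sum_{\sigma\in\Sigma}\psi\bigl((\,\cdot\,-\sigma)/\rho\bigr)$, with $\psi\in C_c^\infty(\R^d)$ a fixed nonnegative even bump, $A,\rho,N>0$ parameters, and $\Sigma\subset\R^d$ a finite symmetric set with $|\sigma|\sim N$ for all $\sigma\in\Sigma$, chosen so that: $\widehat{u_{0,k}}$ is real and even (so the conjugates in $u^{p-m}\overline u^m$ are irrelevant for frequency supports); there is at least one $p$-tuple $(\sigma_1,\dots,\sigma_p)\in\Sigma^p$ with $\sigma_1+\dots+\sigma_p=0$; and no $p$-tuple in $\Sigma^p$ has a nonzero sum of size $o(N)$ (for $p$ even a pair of blocks around $\pm Ne_1$ suffices, for $p$ odd one adds blocks around $\pm 2Ne_1$, and one is free to spread $\Sigma$ over a block of any diameter $D\in[1,N]$). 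Set $t_k=c_0N^{-\alpha}$ with $c_0\ll1$; the smallness condition above makes $t_k\|u_{0,k}\|_{\mathcal F L^1}^{p-1}$ small, so by assumption~(A) we may expand $u_k=\sum_{n\ge1}\mathcal I_n[u_{0,k}]$ as in \eqref{iteration} in $L^\infty_{t_k}(L^2\cap\mathcal F L^1)$. The key point is that $\mathcal I_p[u_{0,k}]=\mathcal N_p(\mathcal L(u_{0,k}),\dots,\mathcal L(u_{0,k}))$ is the leading term. On the Fourier side, for $\xi$ in the cluster of the support of $\widehat{\mathcal I_p[u_{0,k}](t_k)}$ near $0$,
\[
\widehat{\mathcal I_p[u_{0,k}]}(t_k,\xi)=\mathfrak a\,e^{it_k\varphi(\xi)}\int_{\xi=\eta_1+\dots+\eta_p}\Bigl(\int_0^{t_k}e^{it'\Phi}\,dt'\Bigr)\prod_{j=1}^p\widehat{u_{0,k}}(\eta_j)\,d\eta ,
\]
where $\Phi=\pm\varphi(\eta_1)\pm\dots\pm\varphi(\eta_p)-\varphi(\xi)$. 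Because $|\eta_j|\sim N$, $|\xi|\lesssim N$ and $t_k\lesssim N^{-\alpha}$, we have $|t'\Phi|\lesssim c_0$ uniformly, hence $\int_0^{t_k}e^{it'\Phi}\,dt'=t_k(1+O(c_0))$ for every contributing tuple; the balanced tuples (those with $\sum\eta_j$ near $0$) then all add with the same sign — there is no cancellation — while the unbalanced ones contribute only near frequencies $\gtrsim N$. This gives $|\widehat{\mathcal I_p[u_{0,k}]}(t_k,\xi)|\gtrsim t_kA^p|\Sigma|^{\,p-1}\rho^{d(p-1)}$ on a set of measure $\gtrsim D^d$ around $0$, so
\[
\bigl\|\mathcal I_p[u_{0,k}](t_k)\bigr\|_{H^s}\gtrsim t_kA^p|\Sigma|^{\,p-1}\rho^{d(p-1)}\Bigl(\int_{|\xi|\lesssim D}\langle\xi\rangle^{2s}\,d\xi\Bigr)^{1/2}\sim t_k A\,\|u_{0,k}\|_{\mathcal F L^1}^{\,p-1}\Bigl(\int_{|\xi|\lesssim D}\langle\xi\rangle^{2s}\,d\xi\Bigr)^{1/2} .
\]

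One then has to dominate the remaining terms. The linear term contributes $\|\mathcal I_1[u_{0,k}](t_k)\|_{H^s}=\|u_{0,k}\|_{H^s}$, which will be $<1/k$ by the parameter choice. For the tail $\sum_{n\ge 2p-1}\mathcal I_n$ the naive estimate in the data space $L^2\cap\mathcal F L^1$ is too lossy; instead I enumerate the trees underlying \eqref{iteration} (for $\mathcal I_{l(p-1)+1}$ their number is a Fuss--Catalan number, hence $\lesssim C_p^{\,l}$), estimate all but one factor at each node in $\mathcal F L^1$ and the remaining one in $L^\infty$, and use that $\mathcal I_{l(p-1)+1}(t_k)$ is supported in $\{|\xi|\lesssim N\}$ up to an $l$-dependent constant. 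This yields
\[
\bigl\|\mathcal I_{l(p-1)+1}[u_{0,k}](t_k)\bigr\|_{H^s}\lesssim_l C_p^{\,l}\,t_k^{\,l}\,A\,\|u_{0,k}\|_{\mathcal F L^1}^{\,l(p-1)}\Bigl(\int_{|\xi|\lesssim N}\langle\xi\rangle^{2s}\,d\xi\Bigr)^{1/2},
\]
so, comparing with the lower bound for $\mathcal I_p$ (and, when $D\ll N$, first splitting $\mathcal I_n$ into its cluster near $0$ and its part at frequencies $\gtrsim N$, the latter handled by the extra weight $N^s$), each term of the tail is $\lesssim_l C_p^{\,l}\bigl(t_k\|u_{0,k}\|_{\mathcal F L^1}^{p-1}\bigr)^{l-1}\|\mathcal I_p[u_{0,k}](t_k)\|_{H^s}$. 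Since $t_k\|u_{0,k}\|_{\mathcal F L^1}^{p-1}$ may be taken as small as we wish, summing over $l\ge2$ gives $\sum_{n\ge2p-1}\|\mathcal I_n[u_{0,k}](t_k)\|_{H^s}\ll\|\mathcal I_p[u_{0,k}](t_k)\|_{H^s}$, whence $\|u_k(t_k)\|_{H^s}\gtrsim\|\mathcal I_p[u_{0,k}](t_k)\|_{H^s}$.

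Finally one fixes the parameters. Keeping $t_k=c_0N^{-\alpha}$ and imposing $t_k\|u_{0,k}\|_{\mathcal F L^1}^{p-1}\sim\eps_0$ for a fixed small $\eps_0$ determines $A$ through $\|u_{0,k}\|_{\mathcal F L^1}\sim A\,|\Sigma|\,\rho^d\sim A D^d$; then $\|u_{0,k}\|_{H^s}\sim A N^s D^{d/2}$ and, by the two bounds above, $\|\mathcal I_p[u_{0,k}](t_k)\|_{H^s}\sim c_0 A\bigl(\int_{|\xi|\lesssim D}\langle\xi\rangle^{2s}\,d\xi\bigr)^{1/2}$. It is now an elementary computation of exponents — in $N$, and in $\log N$ when $s=-\tfrac d2$ — to see that the diameter $D\in[1,\delta N]$ can be chosen so that $\|u_{0,k}\|_{H^s}\to0$ while $\|\mathcal I_p[u_{0,k}](t_k)\|_{H^s}\to\infty$ as $N\to\infty$ precisely when $s$ satisfies one of the three listed conditions; taking $N=N(k)$ large then also gives $t_kk\ll1$, completing the verification of the hypotheses of Proposition~\ref{prop:gIP}. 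The two points I expect to require care are: (a) the remainder estimate — showing that the entire Picard tail is genuinely subordinate to $\mathcal I_p$ in the $H^s$-norm, where it is essential to exploit the frequency localization (the near-$0$ cluster versus the rest) rather than only the data-space bounds; and (b) the borderline sub-ranges, notably $s=-\tfrac d2$ in the second case (and the endpoints of the other two), where a single frequency block does not suffice and one superposes the construction over a number of well-separated frequency blocks that grows with $N$.
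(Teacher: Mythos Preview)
Your overall strategy---reduce to Proposition~\ref{prop:gIP}, take bump data at frequencies $\sim N$, show $\mathcal I_p$ dominates via the crude modulation bound $|\Phi|\lesssim N^\alpha$, and control the Picard tail by $\mathcal F L^1$/$L^2$ estimates plus the frequency support---is exactly what the paper does. The tree/Fuss--Catalan bookkeeping you propose for the tail is a harmless variant of Lemmas~\ref{lem:bound} and~\ref{lem:boundHs}.

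There is, however, a genuine gap in the parameter selection. You fix $t_k=c_0N^{-\alpha}$ \emph{and} impose $t_k\|u_{0,k}\|_{\mathcal F L^1}^{p-1}\sim\eps_0$. Together these force $\|u_{0,k}\|_{\mathcal F L^1}\sim N^{\alpha/(p-1)}$, hence (writing $\mathrm{amp}$ for your amplitude $A$) $\mathrm{amp}\sim N^{\alpha/(p-1)}D^{-d}$ and
\[
\|u_{0,k}\|_{H^s}\sim N^{\alpha/(p-1)+s}D^{-d/2},\qquad
\|\mathcal I_p[u_{0,k}](t_k)\|_{H^s}\sim N^{\alpha/(p-1)}D^{-d}\,F_s(D).
\]
In the third regime $p<1+\alpha/d$ one always has $s<-d/2$, so $F_s(D)\sim1$; writing $D=N^\theta$ with $\theta\in[0,1]$, the requirements $\|u_{0,k}\|_{H^s}\to0$ and $\|\mathcal I_p\|_{H^s}\to\infty$ become $\theta>\tfrac{2}{d}\bigl(\tfrac{\alpha}{p-1}+s\bigr)$ and $\theta<1$. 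These are compatible only when $s<\tfrac d2-\tfrac{\alpha}{p-1}$, which in this regime is \emph{strictly smaller} than the stated threshold $\tfrac d2-\tfrac{d+\alpha}{p}$. Concretely, with $d=1$, $\alpha=4$, $p=2$ your scheme gives norm inflation only for $s<-\tfrac72$, whereas the proposition claims all $s<-2$; at $s=-3$ one has $\|u_{0,k}\|_{H^s}\sim N^{1-\theta/2}\ge N^{1/2}$ for every admissible $D$, so the smallness of the data fails. Thus your sentence ``an elementary computation of exponents\dots precisely when $s$ satisfies one of the three listed conditions'' is false as stated.

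The issue is that saturating the modulation constraint and the $\mathcal F L^1$ constraint simultaneously removes a degree of freedom. The paper instead normalizes the amplitude so that $\|u_0\|_{H^s}\sim(\log N)^{-1/16}$ from the outset, and then takes
\[
t\sim(\log N)^{-1/8}\min\bigl(\|u_0\|_{\mathcal F L^1}^{-(p-1)},\,N^{-\alpha}\bigr),
\]
so that only one of the two constraints is tight. In the third regime it is the $\mathcal F L^1$ constraint, not the modulation constraint, that binds; allowing $tN^{\alpha}\ll1$ rather than $\sim c_0$ is exactly what recovers the missing range $\tfrac d2-\tfrac{\alpha}{p-1}\le s<\tfrac d2-\tfrac{d+\alpha}{p}$. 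Your proposed fix for the endpoint $s=-\tfrac d2$ (superposing many well-separated blocks) is also not how the paper proceeds: a single block of side $A\sim(\log N)^{-1}N$ already works, the logarithmic prefactor $(\log N)^{-1/16}$ in the data being what makes the borderline case close.
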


If $\varphi (r) = r^{\alpha}$, then \eqref{eq:dispersive} is invariant under the scale transformation
\[
u \mapsto u^{(\lambda)}(t,x) := \lambda ^{\frac{\alpha}{p-1}} u(\lambda ^{\alpha}t , \lambda x),
\]
implying that the scaling-critical Sobolev index is $\frac{d}{2}-\frac{\alpha}{p-1}$.
The index $\frac{d}{2}-\frac{d+\alpha}{p} = d (\frac{1}{2}-\frac{1}{p})-\frac{\alpha}{p}$ appears for a technical reason.
Roughly speaking, this is a reflection of the modulation estimate (see \eqref{est:mod} below) and the embedding $H^{d (\frac{1}{2}-\frac{1}{p})}(\R^d) \hookrightarrow L^p(\R^d)$, which ensures well-definedness of the nonlinearity in the distribution sense.
We note that latter is bigger than the former provided that $p<1+\frac{\alpha}{d}$.
Furthermore, they are equal if $p=1+\frac{\alpha}{d}$, which leads to ill-posedness in $H^{-\frac{d}{2}}(\R^d)$.

Before proving this proposition, we apply it to ill-posedness for the quadratic nonlinear Schr\"{o}dinger equation
\[
i \partial _t u -  \Delta u = u^2 \quad \text{or} \quad i \partial _t u - \Delta u = \overline{u}^2.
\]
We obtain norm inflation with general initial data in $H^s(\R^d)$ if one of the following holds:
\begin{itemize}
\item $d=1$ and $s<-1$.
\item $d=2$ and $s\le -1$.
\item $d \ge 3$, $s<0$, and $s<\frac{d}{2}-2$. 
\end{itemize}
For norm inflation with zero initial data, see \cite{BT, IO}.

We estimate each $\mathcal{I}_n[u_0]$, as shown in the modulation spaces by Kishimoto \cite{Kis} (see also \cite{Oh}).

\begin{lem} \label{lem:bound}
There exists $C_1>1$ depending only on $d$ and $p$ such that
\begin{align*}
& \| \mathcal{I}_n [u_0] (t) \| _{\mathcal{F}L^1} \le (C_1 t^{\frac{1}{p-1}} \| u_0 \| _{\mathcal{F}L^1} )^{n-1} \| u_0 \|_{\mathcal{F}L^1} , \\
& \| \mathcal{I}_n [u_0] (t) \| _{L^2} \le (C_1 t^{\frac{1}{p-1}} \| u_0 \|_{\mathcal{F}L^1} ) ^{n-1} \| u_0 \|_{L^2}
\end{align*}
for $t>0$ and $u_0 \in L^2(\R^d) \cap \mathcal{F}L^1(\R^d)$.
\end{lem}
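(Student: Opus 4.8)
The plan is to prove both inequalities at once by strong induction on $n$, after replacing them by slightly stronger statements carrying an extra polynomial-decay factor. Writing the claimed bounds equivalently as
\[
\| \mathcal{I}_n[u_0](t) \|_{\mathcal{F}L^1} \le C_1^{n-1} t^{\frac{n-1}{p-1}} \| u_0 \|_{\mathcal{F}L^1}^n, \qquad
\| \mathcal{I}_n[u_0](t) \|_{L^2} \le C_1^{n-1} t^{\frac{n-1}{p-1}} \| u_0 \|_{\mathcal{F}L^1}^{n-1} \| u_0 \|_{L^2},
\]
I would instead establish the same estimates with $C_1^{n-1}$ replaced by $C_1^{n-1}/n^2$; since $1/n^2 \le 1$, these imply the lemma. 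The base case $n=1$ is immediate: $\mathcal{I}_1[u_0](t) = e^{it\varphi(-i\nabla)}u_0$, and because $\varphi$ is real-valued the free propagator is multiplication by a unimodular function on the Fourier side, hence an isometry of both $\mathcal{F}L^1(\R^d)$ and $L^2(\R^d)$, so the two inequalities hold with equality. (Well-definedness of $\mathcal{I}_n[u_0](t)$ in $L^2 \cap \mathcal{F}L^1$, with continuity in $t$, comes along for free in the induction, since $\mathcal{F}L^1$ is an algebra and $L^2 \cdot (\mathcal{F}L^1)^{p-1} \subset L^2$.)

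For the inductive step, fix $n \ge 2$; if $\mathcal{I}_n[u_0] \equiv 0$ there is nothing to prove, so I may assume $n \ge p$ (otherwise \eqref{iteration} has no admissible term). By \eqref{iteration}, $\mathcal{I}_n[u_0](t)$ is a finite sum over compositions $n_1 + \dots + n_p = n$, $n_j \ge 1$, of the terms $\mathcal{N}_p(\mathcal{I}_{n_1}[u_0], \dots, \mathcal{I}_{n_p}[u_0])(t) = \int_0^t e^{i(t-t')\varphi(-i\nabla)} N_p\bigl(\mathcal{I}_{n_1}[u_0], \dots, \mathcal{I}_{n_p}[u_0]\bigr)(t')\, dt'$. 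I would bound each such term using three standard facts: the propagator is an isometry of $\mathcal{F}L^1$ and of $L^2$; $\mathcal{F}L^1(\R^d)$ is a Banach algebra on which complex conjugation is isometric, so the monomial $N_p$ obeys $\| N_p(f_1,\dots,f_p) \|_{\mathcal{F}L^1} \le C_d \prod_j \| f_j \|_{\mathcal{F}L^1}$ with $C_d$ depending only on $d$ through the Fourier normalization; and, for the $L^2$ bound, Hölder's inequality with one factor in $L^2$ and the remaining $p-1$ in $L^\infty$ via $\| f \|_{L^\infty} \le C_d \| f \|_{\mathcal{F}L^1}$. Feeding in the induction hypothesis and using $\sum_j (n_j-1) = n-p$, $\sum_j n_j = n$, together with $\int_0^t (t')^{\frac{n-p}{p-1}}\, dt' = \frac{p-1}{n-1} t^{\frac{n-1}{p-1}}$ (legitimate since $n \ge p \ge 2$), each term is bounded in the $\mathcal{F}L^1$ norm by
\[
C_d \, \frac{p-1}{n-1} \, \frac{C_1^{n-p}}{(n_1 \cdots n_p)^2} \, t^{\frac{n-1}{p-1}} \| u_0 \|_{\mathcal{F}L^1}^n ,
\]
and in the $L^2$ norm by the same expression with $\| u_0 \|_{\mathcal{F}L^1}^n$ replaced by $\| u_0 \|_{\mathcal{F}L^1}^{n-1} \| u_0 \|_{L^2}$.

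It remains to sum over the compositions, which is the only delicate point. The crude count — there are at most $\binom{n-1}{p-1}$ compositions, each carrying the harmless factor $\frac{1}{n-1}$ — does not close the induction for $p \ge 3$, because $\frac{1}{n-1}\binom{n-1}{p-1}$ grows polynomially in $n$; this is precisely why the weight $n^{-2}$ was built in. Instead I would use the elementary convolution estimate
\[
\sum_{\substack{n_1 + \dots + n_p = n \\ n_j \ge 1}} \frac{1}{(n_1 \cdots n_p)^2} \le \frac{p^3}{n^2} \Bigl( \sum_{m \ge 1} \frac{1}{m^2} \Bigr)^{p-1},
\]
valid because the largest part of any such composition is $\ge n/p$, hence contributes a factor $\le p^2/n^2$, while the remaining $p-1$ indices may be summed freely over $\mathbb{N}$ (the constraint $\sum_j n_j = n$ then determining the largest part). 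Combining this with the previous display and using $\frac{p-1}{n-1}\le 1$, one gets $\| \mathcal{I}_n[u_0](t) \|_{\mathcal{F}L^1} \le C_d\, p^3 \bigl( \sum_{m\ge1} m^{-2}\bigr)^{p-1} C_1^{n-p}\, n^{-2}\, t^{\frac{n-1}{p-1}} \| u_0 \|_{\mathcal{F}L^1}^n$, which is $\le \frac{C_1^{n-1}}{n^2} t^{\frac{n-1}{p-1}} \| u_0 \|_{\mathcal{F}L^1}^n$ as soon as $C_1 = C_1(d,p) > 1$ is chosen with $C_1^{p-1} \ge C_d\, p^3 \bigl( \sum_{m\ge1} m^{-2}\bigr)^{p-1}$; the same $C_1$ handles the $L^2$ inequality verbatim. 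Discarding the factor $n^{-2}$ then yields the lemma.

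The main obstacle, then, is not any individual estimate but this combinatorial bookkeeping: the gain $\frac{p-1}{n-1}$ from time integration does not by itself defeat the number of Picard iterates contributing to $\mathcal{I}_n$ once $p \ge 3$, so the induction must be run with the sharpened, $n^{-2}$-decorated hypothesis. (Equivalently, one could identify the optimal constants through the generating function $H(x) = x\bigl(1 - (p-1)x^{p-1}\bigr)^{-1/(p-1)}$, which solves $xH' - H = (p-1)H^p$, and read off its Fuss--Catalan-type coefficients, whose growth is purely geometric.) Everything else — unitarity of the free propagator, the Banach-algebra property of $\mathcal{F}L^1$, Hölder's inequality — is routine.
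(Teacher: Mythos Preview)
Your proof is correct and follows essentially the same strategy as the paper's. The only difference is organizational: the paper isolates the combinatorial step as a separate lemma (Lemma~\ref{lem:bound-seq}, attributed to Kishimoto), first showing by induction that $\|\mathcal{I}_n[u_0](t)\|_{\mathcal{F}L^1} \le a_n (t^{1/(p-1)}\|u_0\|_{\mathcal{F}L^1})^{n-1}\|u_0\|_{\mathcal{F}L^1}$ for the sequence $a_1=1$, $a_n=\frac{p-1}{n-1}\sum a_{n_1}\cdots a_{n_p}$, and then invoking that lemma to get $a_n \le C_0^{n-1}$; you instead fold the proof of that lemma directly into the induction by carrying the $n^{-2}$ weight. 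The appearance of $\pi^2/6 = \sum_{m\ge 1} m^{-2}$ in the paper's constant $C_0$ is exactly your convolution estimate, so the two arguments are the same at the level of ideas.
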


The proof is essentially reduced to the following bound.

\begin{lem}[Kishimoto \cite{Kis}] \label{lem:bound-seq}
Let $\{ a_n \}$ be a sequence of nonnegative real numbers such that
\[
a_n \le C \sum _{\substack{n_1, \dots , n_p \in \mathbb{N} \\ n_1+ \dots +n_p=n}} a_{n_1} \dots a_{n_p}
\]
for some $p \in \mathbb{Z}_{\ge 2}$ and $C>0$.
Then,
\[
a_n \le a_1 C_0^{n-1}
\]
with
\[
C_0 := \frac{\pi^2}{6} (Cp^2)^{\frac{1}{p-1}}a_1.
\]
\end{lem}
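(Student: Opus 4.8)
\emph{Plan.} I would not iterate the inequality directly: the number of compositions of $n$ into $p$ positive parts is $\binom{n-1}{p-1}$, which grows with $n$, so a naive induction on $n$ (and, similarly, a ``peel off one factor'' induction on $p$) cannot close. Instead I would pass to the majorant recursion where equality holds, solve it in closed form with a generating function, and read off the coefficient bound by Lagrange inversion; the coefficients that appear are the Fuss--Catalan numbers, for which a one-line factorial estimate suffices. Concretely, define $b_1 := a_1$ and, for $n \ge 2$,
\[
b_n := C \sum_{\substack{n_1,\dots,n_p \in \mathbb{N}\\ n_1+\dots+n_p=n}} b_{n_1}\cdots b_{n_p}.
\]
Every index occurring here is $\le n-(p-1)<n$, so a strong induction using only nonnegativity gives $a_n \le b_n$ for all $n$; it therefore suffices to bound $b_n$.

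Next I would introduce the formal power series $B(z):=\sum_{n\ge1}b_nz^n$, for which the recursion says exactly $B = a_1 z + C B^p$. Writing $B = a_1 z\,\Phi$ and setting $w := C a_1^{p-1} z^{p-1}$ reduces this to the universal relation $\Phi = 1 + w\Phi^p$, equivalently $\Psi := \Phi-1$ solves $\Psi = w(1+\Psi)^p$. By Lagrange inversion the coefficient of $w^k$ in $\Phi$ equals, for $k\ge1$,
\[
\frac1k\,[\lambda^{k-1}](1+\lambda)^{pk} = \frac1k\binom{pk}{k-1},
\]
while the coefficient of $w^0$ is $1$. Hence $b_n=0$ unless $n=k(p-1)+1$ for some integer $k\ge0$, and in that case $b_n = a_1\,(Ca_1^{p-1})^k\cdot\frac1k\binom{pk}{k-1}$ for $k\ge1$ (with $b_1=a_1$). (One can bypass generating functions altogether by checking directly that $c_k:=\frac1k\binom{pk}{k-1}$ satisfies $c_0=1$ and $c_k=\sum_{j_1+\dots+j_p=k-1}c_{j_1}\cdots c_{j_p}$ via Vandermonde's identity.)

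It then remains to estimate these coefficients. Since $p\ge2$ one has $\binom{pk}{k-1}\le\binom{pk}{k}\le(pk)^k/k!\le(ep)^k$, so each coefficient is at most $(ep)^k$. The quantity $p(\pi^2/6)^{p-1}$ is increasing in $p$ and equals $\pi^2/3>e$ already at $p=2$, whence $ep\le p^2(\pi^2/6)^{p-1}$ and therefore $(ep)^k\le\big(p^2(\pi^2/6)^{p-1}\big)^k$. Using $(Ca_1^{p-1})^k=(C^{1/(p-1)}a_1)^{k(p-1)}=(C^{1/(p-1)}a_1)^{n-1}$, this gives
\[
b_n \le a_1\,(C^{1/(p-1)}a_1)^{n-1}\big(p^2(\tfrac{\pi^2}{6})^{p-1}\big)^k = a_1\Big(\tfrac{\pi^2}{6}(Cp^2)^{1/(p-1)}a_1\Big)^{n-1}=a_1C_0^{n-1},
\]
and combined with $a_n\le b_n$ this is exactly the asserted bound.

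The main obstacle is strategic rather than computational: one has to recognize that the obvious inductions do not close and that the right move is to solve the majorant equation explicitly. Once that is done, the only step requiring genuine care is justifying the formal-power-series identity $B = a_1 z + CB^p$ and the application of Lagrange inversion (or, in the alternative route, verifying the Fuss--Catalan recursion); the binomial estimate and the arithmetic with exponents are routine.
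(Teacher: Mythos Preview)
Your argument is correct. The majorant sequence $b_n$ does dominate $a_n$ by strong induction, the Lagrange inversion computation is right and identifies $b_{k(p-1)+1}/a_1(Ca_1^{p-1})^k$ with the Fuss--Catalan number $\tfrac{1}{k}\binom{pk}{k-1}$, and the chain of inequalities $\tfrac{1}{k}\binom{pk}{k-1}\le\binom{pk}{k}\le (pk)^k/k!\le (ep)^k\le \big(p^2(\pi^2/6)^{p-1}\big)^k$ is valid for all $p\ge 2$ and $k\ge 1$. The final repackaging into $a_1C_0^{n-1}$ is correct arithmetic.

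There is nothing to compare against in this paper: the lemma is quoted from Kishimoto's preprint \cite{Kis} and is not proved here. Your generating-function route is the natural one and is essentially how such ``tree-counting'' bounds are usually obtained; an equivalent bare-hands proof would verify the Fuss--Catalan recursion $c_k=\sum_{j_1+\dots+j_p=k-1}c_{j_1}\cdots c_{j_p}$ directly (as you note) and then run the same factorial estimate. Either way the content is the same: the point is that the growth of the number of $p$-ary plane trees with $k$ internal nodes is only exponential in $k$, and the constant $\pi^2/6$ in $C_0$ is simply a convenient numerical cushion to absorb that exponential rate.
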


\begin{proof}[Proof of Lemma \ref{lem:bound}]
Let $\{ a_n \}$ be the sequence defined by
\[
a_1 = 1, \quad
a_n = \frac{p-1}{n-1} \sum _{\substack{n_1, \dots , n_p \in \mathbb{N} \\ n_1+ \dots +n_p=n}} a_{n_1} \dots a_{n_p}
\]
for $n\ge 2$.
Owing to Lemma \ref{lem:bound-seq}, it suffices to show that
\begin{align}
& \| \mathcal{I}_n [u_0] (t) \| _{\mathcal{F}L^1} \le a_n ( t^{\frac{1}{p-1}} \| u_0 \| _{\mathcal{F}L^1} ) ^{n-1} \| u_0 \|_{\mathcal{F}L^1} , \notag \\
& \| \mathcal{I}_n [u_0] (t) \| _{L^2} \le a_n (t^{\frac{1}{p-1}} \| u_0 \| _{\mathcal{F}L^1} ) ^{n-1} \| u_0 \|_{L^2} . \label{bound:L2}
\end{align}
We use induction.
Since $n=1$ is trivial, we assume that this estimate holds up to $n-1$.
Then, from the induction hypothesis, we have
\begin{align*}
\| \mathcal{I}_n [u_0] (t) \| _{\mathcal{F}L^1}
& \le \sum _{\substack{n_1, \dots , n_p \in \mathbb{N} \\ n_1+\dots +n_p=n}} \int_0^t \bigg\| N_p (\mathcal{I}_{n_1}[u_0] (t'), \dots , \mathcal{I}_{n_p}[u_0] (t')) \bigg\| _{\mathcal{F}L^1} dt' \\
& \le \sum _{\substack{n_1, \dots , n_p \in \mathbb{N} \\ n_1+\dots +n_p=n}} \int_0^t \prod_{j=1}^p \bigg\| \mathcal{I}_{n_j}[u_0] (t') \bigg\| _{\mathcal{F}L^1} dt' \\
& \le \sum _{\substack{n_1, \dots , n_p \in \mathbb{N} \\ n_1+\dots +n_p=n}} \int_0^t \prod _{j=1}^p \bigg\{ a_j ( t'^{\frac{1}{p-1}} \| u_0 \| _{\mathcal{F}L^1} ) ^{n_j-1} \| u_0 \|_{\mathcal{F}L^1} \bigg\} dt' \\
& = a_n ( t^{\frac{1}{p-1}} \| u_0 \|_{\mathcal{F}L^1} ) ^{n-1} \| u_0 \|_{\mathcal{F}L^1} .
\end{align*}
The estimate \eqref{bound:L2} is obtained in the same manner.
\end{proof}

To show estimates in the Sobolev spaces, we specify an initial datum.
We denote by $Q_r(q)$ the cube with length $2r$ centered at $(q, 0, \dots , 0)$, i.e.,
\[
Q_r(q) := [q-r,q+r] \times \prod _{j=2}^d [-r,r].
\]
Put $\widetilde{Q}_r(q) := Q_r(q) \cup Q_r(-q)$.
For positive numbers $N, \, A$ with $N \ge \max (2A, 2)$, we take the initial datum
\[
u_0 = (\log N)^{-\frac{1}{16}} N^{-s} A^{-\frac{d}{2}} \mathcal{F}^{-1} [ \bm{1}_{\widetilde{Q}_A(N)} + \bm{1}_{\widetilde{Q}_A(2N)} ] .
\]
Then,
\[
\| u_0 \| _{H^s} \sim (\log N)^{-\frac{1}{16}}, \quad
\| u_0 \| _{\mathcal{F}L^1} \sim (\log N)^{-\frac{1}{16}} N^{-s} A^{\frac{d}{2}} .
\]
Here, we recall the following simple estimate.

\begin{lem} \label{lem:conv}
For any $q_1, \, q_2 \in \R^d$, and $r>0$, there exist constants $c_1>0$ and $c_2>0$ depending only on $d$ such that
\[
c_1 r^d \bm{1}_{Q_r(q_1+q_2)} \le \bm{1}_{Q_r(q_1)} \ast \bm{1}_{Q_r(q_2)} \le c_2 r^d \bm{1}_{Q_{2r}(q_1+q_2)} .
\]
\end{lem}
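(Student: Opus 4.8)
The plan is to reduce to the centered case and then exploit the product structure of the cubes. First I would observe that $Q_r(q) = q e_1 + Q_r(0)$, where $e_1 = (1,0,\dots ,0)$, so that translating both arguments gives
\[
\bigl( \bm{1}_{Q_r(q_1)} \ast \bm{1}_{Q_r(q_2)} \bigr)(x) = \bigl( \bm{1}_{Q_r(0)} \ast \bm{1}_{Q_r(0)} \bigr)\bigl( x - (q_1+q_2) e_1 \bigr),
\]
and likewise $Q_r(q_1+q_2) = (q_1+q_2)e_1 + Q_r(0)$ and $Q_{2r}(q_1+q_2) = (q_1+q_2)e_1 + Q_{2r}(0)$. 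Hence it suffices to prove the claimed two-sided bound in the case $q_1 = q_2 = 0$.

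Next I would write the convolution as the measure of an intersection,
\[
\bigl( \bm{1}_{Q_r(0)} \ast \bm{1}_{Q_r(0)} \bigr)(x) = \bigl| Q_r(0) \cap \bigl( x - Q_r(0) \bigr) \bigr|,
\]
and use that $Q_r(0) = \prod_{j=1}^d [-r,r]$ is a product set, so this measure factors as $\prod_{j=1}^d \bigl| [-r,r] \cap [x_j - r, x_j + r] \bigr|$. Each one-dimensional factor equals $\max(2r - |x_j|, 0)$, the standard triangular function of width $4r$ and height $2r$. Consequently the product vanishes unless $|x_j| < 2r$ for every $j$, i.e. unless $x \in Q_{2r}(0)$, and it is bounded above by $(2r)^d$; this yields the upper bound with $c_2 = 2^d$. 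For the lower bound, if $x \in Q_r(0)$ then $|x_j| \le r$ for all $j$, so each factor is at least $2r - r = r$ and the product is at least $r^d$, giving the lower bound with $c_1 = 1$.

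I do not expect any real obstacle: once translation invariance and the product structure are invoked, the statement is a routine one-dimensional computation. The only point requiring a little care is keeping track of the two different cubes — the convolution is \emph{supported} on $Q_{2r}(q_1+q_2)$ but is bounded \emph{below} only on the smaller cube $Q_r(q_1+q_2)$ — which is precisely the gap reflected in the two displayed inequalities. The constants can be sharpened, but any choice depending only on $d$ is enough for the later application, so I would not optimize them.
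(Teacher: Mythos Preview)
Your argument is correct: the reduction by translation to the centered case, the factorization via the product structure of $Q_r(0)=[-r,r]^d$, and the one-dimensional computation $|[-r,r]\cap[x_j-r,x_j+r]|=\max(2r-|x_j|,0)$ all go through exactly as you describe, yielding $c_1=1$ and $c_2=2^d$. The paper does not give a proof of this lemma at all --- it is simply introduced with the phrase ``we recall the following simple estimate'' --- so your write-up is in fact more detailed than what the paper provides; the approach you take is the natural (and essentially only) one.
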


Lemma \ref{lem:bound-seq} yields estimates in the Sobolev space.
We remark that similar estimates are proved in \cite{Kis, Oh}.

\begin{lem} \label{lem:boundHs}
For any $s<0$, there exists $C$ depending only on $d$, $p$, $s$ such that for $n \ge 2$, we have
\[
\| \mathcal{I}_n[u_0] (t) \| _{H^s} \le C^n t^{\frac{n-1}{p-1}} \| u_0 \|_{\mathcal{F}L^1}^{n-2} \| u_0 \|_{L^2}^2 F_s(A) ,
\]
where
\[
F_s(A) := \begin{cases} A^{s+\frac{d}{2}}, & \text{if } -\frac{d}{2}<s<0, \, A \ge 1, \\ (\log \lr{A})^{\frac{1}{2}}, & \text{if } s=-\frac{1}{2}, \, A \ge 1, \\ 1, & \text{if } s<-\frac{d}{2}, \, A \ge 1, \\  A^{\frac{d}{2}} , & \text{if } A \le 1. \end{cases}
\]
\end{lem}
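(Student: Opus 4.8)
The plan is to bypass a fresh induction and instead read the bound off directly from Lemma \ref{lem:bound} together with the frequency-support structure of the building blocks $\mathcal{I}_n[u_0]$. Concretely, I would (i) record that $\supp \widehat{\mathcal{I}_n[u_0](t)}$ is contained in a controlled union of cubes, (ii) upgrade Lemma \ref{lem:bound} to a pointwise (i.e.\ $\mathcal{F}L^\infty$) bound in which two $L^2$–factors have been peeled off, and then (iii) combine the two by Cauchy--Schwarz in frequency, at which point the function $F_s(A)$ appears as a purely measure-theoretic quantity. Since $\widehat{u_0}$ is supported in $\widetilde{Q}_A(N)\cup\widetilde{Q}_A(2N)$, and $\mathcal{N}_p$ applies a unimodular Fourier multiplier to a $p$-fold product of factors (each possibly conjugated, which only reflects supports through the origin), an easy induction on $n$ via the recurrence \eqref{iteration}, using that convolution adds supports, gives
\[
\supp \widehat{\mathcal{I}_n[u_0](t)} \subset \bigcup_{0\le k\le 2n} \widetilde{Q}_{nA}(kN), \qquad t\ge 0 .
\]

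For step (ii) I would use the elementary convolution estimate $\|\widehat{f_1\cdots f_p}\|_{L^\infty}\le \|f_1\|_{L^2}\|f_2\|_{L^2}\prod_{j=3}^p \|f_j\|_{\mathcal{F}L^1}$ (from $\|\widehat{g}\ast\widehat{h}\|_{L^\infty}\le\|\widehat g\|_{L^2}\|\widehat h\|_{L^2}$ and $\|\,\cdot\ast\widehat h\|_{L^\infty}\le\|\cdot\|_{L^\infty}\|\widehat h\|_{L^1}$), insert it into \eqref{iteration}, apply Lemma \ref{lem:bound} to each $\mathcal{I}_{n_j}[u_0](t')$ (two factors estimated in $L^2$, the remaining $p-2$ in $\mathcal{F}L^1$), carry out the $t'$–integral — the exponent matches because $\sum_j (n_j-1)=n-p$ and $\tfrac{n-p}{p-1}+1=\tfrac{n-1}{p-1}$ — and finally sum over the at most $2^n$ compositions $n_1+\dots+n_p=n$. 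This should yield
\[
\|\widehat{\mathcal{I}_n[u_0](t)}\|_{L^\infty}\le C^n t^{\frac{n-1}{p-1}}\|u_0\|_{\mathcal{F}L^1}^{n-2}\|u_0\|_{L^2}^2 .
\]
Step (iii) is then just
\[
\|\mathcal{I}_n[u_0](t)\|_{H^s}^2 \le \|\widehat{\mathcal{I}_n[u_0](t)}\|_{L^\infty}^2\sum_{0\le k\le 2n}\int_{\widetilde{Q}_{nA}(kN)}\lr{\xi}^{2s}\,d\xi ,
\]
so everything reduces to the claim $\sum_{0\le k\le 2n}\int_{\widetilde{Q}_{nA}(kN)}\lr{\xi}^{2s}\,d\xi\le C^n F_s(A)^2$.

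That last claim I would prove cube by cube. The cube at the origin contributes the standard integral $\int_{|\xi|\lesssim R}\lr{\xi}^{2s}\,d\xi$, which is $\sim R^{2s+d}$, $\sim\log R$, or $\sim 1$ according as $2s+d$ is positive, zero, or negative (and $\sim R^d$ for $R\lesssim 1$), evaluated at $R\sim nA$; the gap between $F_s(nA)$ and $F_s(A)$ is at worst a power of $n$ (or of $\log n$), absorbed into $C^n$, with a separate elementary check when $A\le 1$ but $nA\ge 1$. For a cube centered at $kN$ with $k\ge 1$ and $nA\le kN/2$, the cube lies at distance $\gtrsim kN\ge N$ from the origin, so $\int\lesssim (kN)^{2s}(nA)^d$; summing $\sum_{k\ge 1}k^{2s}\lesssim C^n$ leaves one to check $N^{2s}A^d\lesssim F_s(A)^2$, which holds in every regime exactly because $N\ge 2A$ (for instance, for $-\tfrac d2<s<0$ it becomes $(N/A)^{2s}\le 2^{2s}<1$) — this is where the hypothesis $N\ge\max(2A,2)$ is used. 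The finitely many remaining cubes with $nA>kN/2$ all lie in $\{|\xi|\lesssim nA\}$ and are handled like the origin cube, the count $\le 2n$ being absorbed into $C^n$.

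The main obstacle is organizational rather than conceptual: one must run the case analysis — the three regimes $2s+d>0$, $2s+d=0$, $2s+d<0$ appearing in the definition of $F_s$, together with $A\ge 1$ versus $A\le 1$ (and, for large $n$, $nA\ge 1$ versus $nA\le 1$) — uniformly, and verify in each case that every $n$-dependent factor produced along the way is dominated by $C^n$ with $C=C(d,p,s)$. No oscillation or cancellation is exploited; the proof is entirely frequency-support bookkeeping combined with the product estimates already in hand.
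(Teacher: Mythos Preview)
Your proposal is correct and follows essentially the same three-step strategy as the paper: frequency-support control, an $\mathcal{F}L^\infty$ bound obtained by peeling off two $L^2$ factors via Young's inequality and Lemma~\ref{lem:bound}, and then Cauchy--Schwarz against $\lr{\xi}^s$ over the support. The only difference is cosmetic: where you carry out a cube-by-cube case analysis of $\sum_k\int_{\widetilde{Q}_{nA}(kN)}\lr{\xi}^{2s}\,d\xi$, the paper simply notes that $s<0$ makes $\lr{\xi}^{2s}$ largest near the origin, so each of the at most $4^n$ cubes contributes no more than $\|\lr{\cdot}^s\|_{L^2(Q_{nA}(0))}^2$, which immediately gives $\le C_3^n F_s(A)$ without separating near and far cubes.
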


\begin{proof}
Lemma \ref{lem:conv} implies that the support of $\mathcal{F}[\mathcal{I}_n[u_0]]$ consists of at most $4^{n}$ cubes of volume $(2nA)^d$.
Hence, there exists $C_2>0$ depending only on $d$ such that
\[
| \supp \mathcal{F}[\mathcal{I}_n[u_0]] |\le 4^n (2nA)^d = n^d 2^{2n+d} A^d < C_2^n A^d.
\]
From $s<0$ and $A \le \frac{N}{2}$, we get
\[
\| \lr{\cdot}^s\| _{L^2_{\xi}(\supp \mathcal{F}[\mathcal{I}_n[u_0]])}
\le 4^n \| \lr{\cdot}^s\| _{L^2_{\xi}(Q_{nA}(0))}
\le C_3^n F_s(A),
\]
where $C_3=C_3(d,s)>0$.

By Young' inequality and Lemma \ref{lem:bound},
\begin{align*}
& \| \mathcal{I}_n[f] (t) \| _{\mathcal{F}L^{\infty}} \\
& \le \sum _{\substack{n_1, \dots , n_p \in \mathbb{N} \\ n_1+\dots +n_p=n}} \int _0^t \bigg\| N_p (\mathcal{I}_{n_1}[u_0] (t'), \dots , \mathcal{I}_{n_p}[u_0] (t')) \bigg\| _{\mathcal{F}L^{\infty}} dt' \\
& \le \sum _{\substack{n_1, \dots , n_p \in \mathbb{N} \\ n_1+\dots +n_p=n}} \int _0^t \| \mathcal{I}_{n_1}[f] (t')  \| _{L^2} \| \mathcal{I}_{n_2}[f] (t')  \| _{L^2} \prod _{j=3}^p \| \mathcal{I}_{n_j}[f] (t') \| _{\mathcal{F} L^1} dt' \\
& \le \sum _{\substack{n_1, \dots , n_p \in \mathbb{N} \\ n_1+\dots +n_p=n}} \int _0^t (C_1t'^{\frac{1}{p-1}} \| u_0\| _{\mathcal{F}L^1})^{n_1-1} \| u_0 \| _{L^2} (C_1t'^{\frac{1}{p-1}} \| u_0\|_{\mathcal{F}L^1})^{n_2-1} \| u_0 \| _{L^2} \\
& \hspace*{100pt} \times \prod _{j=3}^p (C_1t'^{\frac{1}{p-1}} \| u_0 \|_{\mathcal{F}L^1})^{n_j-1} \| u_0 \|_{\mathcal{F}L^1} dt' \\
& \le n^p C_1^{n-p} t^{\frac{n-1}{p-1}} \| u_0 \|_{\mathcal{F}L^1}^{n-2} \| u_0 \| _{L^2}^2 .
\end{align*}
Combining the estimates above with
\[
\| \mathcal{I}_n[u_0] \| _{H^s}
\le \| \lr{\cdot}^s\| _{L^2_{\xi}(\supp \mathcal{F}[\mathcal{I}_n[u_0]])} \| \mathcal{I}_n [u_0] \|_{\mathcal{F} L^{\infty}} ,
\]
we obtain the desired bound.
\end{proof}

\begin{proof}[Proof of Proposition \ref{prop:disIP}]
A direct calculation yields
\begin{align*}
& \mathcal{F}[\mathcal{I}_p[u_0]](t,\xi) \\
& = \mathfrak{a} \int _0^t e^{i (t-t') \varphi (\xi)} \int_{\xi_1+\dots + \xi_p=\xi} \left( \prod _{j=1}^m e^{it' \varphi (\xi_j)} \widehat{u}_0(\xi_j) \right) \left( \prod _{l=m+1}^p \overline{e^{it' \varphi (\xi_l)} \widehat{u}_0(\xi_l)} \right) dt' \\
& = \mathfrak{a} e^{it \varphi (\xi)} \int_{\ast} \int _0^t e^{ -it' M(\xi_1, \dots , \xi_p)} dt' \prod _{j=1}^p \widehat{u}_0(\xi_j) ,
\end{align*}
where
\[
\int _{\ast} = \int _{\xi_1+\dots + \xi_p=\xi}, \quad
M(\xi_1,\dots , \xi_p) = \varphi \left( \sum _{j=1}^p \xi_l \right) - \sum _{j=1}^m \varphi (\xi_j) + \sum _{l=m+1}^p \varphi (\xi_l) .
\]
Since $\xi _j \in \widetilde{Q}_A(N) \cup \widetilde{Q}_A(2N)$, we have
\begin{equation} \label{est:mod}
|M(\xi_1, \dots , \xi_p)|
\lesssim |\varphi (N)| \sim N^{\alpha}
\end{equation}
for $|\xi | \le A$.
Hence, taking $0<t \ll N^{-\alpha}$, we have
\[
\Re \int _0^t e^{ -it' M(\xi_1, \dots , \xi_p)} dt'  > \frac{t}{2}.
\]
Therefore, by Lemma \ref{lem:conv},
\begin{align*}
\| \mathcal{I}_p[u_0] (t) \| _{H^s}
& \ge \| \lr{\cdot}^s \mathcal{F}[\mathcal{I}_p[u_0]](t) \| _{L^2(Q_A(0))} \\
& \gtrsim t (\log N)^{-\frac{p}{16}} (N^{-s} A^{-\frac{d}{2}})^p A^{(p-1)d} F_s(A) \\
& \sim t (\log N)^{-\frac{p}{16}} (N^{-s} A^{\frac{d}{2}})^{p-2} N^{-2s} F_s(A) \\
& \sim t \| u_0 \|_{\mathcal{F}L^1}^{p-2} \| u_0 \|_{L^2}^2 F_s(A) .
\end{align*}
Hence, it suffices to choose $t=t(N)>0$ and $A=A(N) \in [1, N]$ satisfying
\[
t \| u_0 \|_{\mathcal{F}L^1}^{p-1} \ll 1, \quad
t \ll N^{-\alpha} , \quad
t \| u_0 \|_{\mathcal{F}L^1}^{p-2} \| u_0 \|_{L^2}^2 F_s(A) \gtrsim (\log N)^{\frac{1}{8}} .
\]
Indeed, by the triangle inequality and Lemma \ref{lem:boundHs}, we obtain
\begin{align*}
\| u(t) \| _{H^s}
& \ge \| \mathcal{I}_p[u_0](t) \|_{H^s} - \| \mathcal{I}_1[u_0] (t) \|_{H^s} - \sum _{n=2p-1}^{\infty} \| \mathcal{I}_n[u_0] (t) \|_{H^s} \\
& \gtrsim t \| u_0 \|_{\mathcal{F}L^1}^{p-2} \| u_0 \|_{L^2}^2 F_s(A) - \| u_0 \|_{H^s} - t \| u_0 \|_{\mathcal{F}L^1}^{p-2} \| u_0 \|_{L^2}^2 F_s(A) \times t \| u_0 \|_{\mathcal{F}L^1}^{p-1} \\
& \gtrsim (\log N)^{\frac{1}{8}}.
\end{align*}
From Proposition \ref{prop:gIP} and Remark \ref{rmk:persistence}, we obtain norm inflation with general initial data.

It remains to verify the above three conditions.
We take
\[
t \sim (\log N)^{-\frac{1}{8}} \min ( \| u_0 \|_D^{-(p-1)}, N^{-\alpha}) .
\]
Then, the first and second conditions are fulfilled.
The third inequality is reduced to
\begin{equation} \label{cond:3}
\min \left\{ (\log N)^{-\frac{3}{16}} N^{-s} A^{-\frac{d}{2}} , (\log N)^{-\frac{p+2}{16}} N^{-ps-\alpha} A^{\frac{d(p-2)}{2}} \right\} F_s(A) \gtrsim (\log N)^{\frac{1}{8}}.
\end{equation}
Here,
\begin{align*}
& N^{-s} A^{-\frac{d}{2}} F_s(A) = \begin{cases} \left( \frac{N}{A} \right)^{-s} , & \text{if } -\frac{d}{2}<s<0, \\ \left( \frac{N}{A} \right)^{\frac{d}{2}} (\log A)^{\frac{1}{2}} , & \text{if } s=-\frac{d}{2}, \\ N^{-s} A^{-\frac{d}{2}}, & \text{if } s<-\frac{d}{2}, \end{cases} \\
& N^{-ps-\alpha} A^{\frac{d(p-2)}{2}} F_s(A) = \begin{cases} N^{-ps-\alpha} A^{s+\frac{d(p-1)}{2}}, & \text{if } -\frac{d}{2}<s<0, \\ N^{\frac{dp}{2}-\alpha} A^{\frac{d(p-2)}{2}} (\log A)^{\frac{1}{2}}, & \text{if } s=-\frac{d}{2}, \\ N^{-ps-\alpha} A^{\frac{d(p-2)}{2}}, & \text{if } s<-\frac{d}{2}. \end{cases}
\end{align*}
From the assumption, we can take a real number $\theta$ satisfying
\[
\begin{cases} \max \left( \frac{ 2(ps+\alpha )}{2s+d (p-1)} ,0 \right) < \theta <1, & \text{if } -\frac{d}{2}< s <0, \\
\max \left( \frac{2(ps+\alpha)}{d(p-2)} ,0 \right) < \theta < 1, & \text{if } s \le -\frac{d}{2}, \, p>2, \\
0, & \text{if } s < -\frac{d}{2}, \, p=2.
\end{cases}
\]
We note that $s=-\frac{d}{2}$ and $p=2$ imply $d \ge \alpha$.
In fact, if $d < \alpha$, then $p=2<1+\frac{\alpha}{d}$.
Hence, the assumption is reduced to $s<-\frac{\alpha}{2}$, which contradicts to $s=-\frac{d}{2}$ and $d< \alpha$.
Therefore, setting
\[
A= \begin{cases} (\log N)^{-1} N, & \text{if } s=-\frac{d}{2}, \, p=2, \\ N^{\theta}, & \text{otherwise}, \end{cases}
\]
by the assumption, we obtain \eqref{cond:3} for sufficiently large $N \in 2^{\mathbb{N}}$.
\end{proof}

The condition $\varphi (r) \sim r^{\alpha}$ for $r>1$ is used only in the lower bound of $\| \mathcal{I}_p [u_0] \|_{H^s}$, more precisely the modulation bound \eqref{est:mod}.
Hence, we obtain ill-posedness in $H^s(\R^d)$ with $s$ larger if \eqref{est:mod} is improved.

Let
\[
Q_r^{(1)}(q) := [q-r,q+r] \times \prod _{j=2}^d [-1,1]
\]
for $r>0$ and $q \in \R$.
Put $\widetilde{Q}_r^{(1)}(q) := Q_r^{(1)}(q) \cup Q_r^{(1)}(-q)$.
If there exists $\beta \in (0, \alpha]$ such that
\begin{equation} \label{modulation}
|M(\xi_1,\dots , \xi_p)|  \lesssim N^{\alpha - \beta} A^{\beta} +1
\end{equation}
for any $\xi _j \in \widetilde{Q}_A^{(1)}(N)$ with $\xi_1+\dots \xi_p \in Q_A^{(1)}(0)$, then we obtain the following.

\begin{cor} \label{cor:disIP}
In addition to the assumption in Proposition \ref{prop:disIP}, we assume that \eqref{modulation} holds.
Then, the flow map of the Cauchy problem \eqref{eq:dispersive} is discontinuous everywhere in $H^s(\R^d)$ if $s < \frac{\alpha - \beta}{2\beta} \left( \frac{1}{p} -1 \right)$.
More precisely, norm inflation with general initial data occurs.
\end{cor}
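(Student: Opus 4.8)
The plan is to run the proof of Proposition~\ref{prop:disIP} almost line by line, with two modifications: the cubes $\widetilde{Q}_A(q)$ are replaced by the slabs $\widetilde{Q}_A^{(1)}(q)$ (so that \eqref{modulation}, rather than \eqref{est:mod}, is the modulation bound actually used), and the dilation $A = A(N)$ is chosen so that the modulation factor stays nondegenerate on a time interval of length $\sim 1$ instead of $\sim N^{-\alpha}$. Concretely, for $N \in 2^{\mathbb{N}}$ I would take
\[
u_0 = (\log N)^{-\frac{1}{16}} N^{-s} A^{-\frac12}\, \mathcal{F}^{-1}\bigl[\, \bm{1}_{\widetilde{Q}_A^{(1)}(N)} + \bm{1}_{\widetilde{Q}_A^{(1)}(2N)} \,\bigr],
\]
so that $\| u_0 \|_{H^s} \sim (\log N)^{-1/16}$, $\| u_0 \|_{L^2} \sim (\log N)^{-1/16} N^{-s}$, and $\| u_0 \|_{\mathcal{F}L^1} \sim (\log N)^{-1/16} N^{-s} A^{1/2}$, and I would impose $N^{\alpha-\beta}A^{\beta} \sim 1$, i.e.\ $A \sim N^{-(\alpha-\beta)/\beta} \lesssim 1$ (with $A \sim 1$ when $\beta = \alpha$).

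First I would transcribe Lemmas~\ref{lem:conv}, \ref{lem:bound} and \ref{lem:boundHs} to the slab setting. Lemma~\ref{lem:bound} is unchanged. In the slab analogue of Lemma~\ref{lem:conv} the prefactor $r^d$ becomes $r$ while the transverse side lengths merely add, so $\supp \mathcal{F}[\mathcal{I}_n[u_0]]$ is a union of at most $4^n$ slabs of first-coordinate width $\lesssim nA$ and transverse width $\lesssim n$; since $s < 0$ this gives $\| \lr{\cdot}^s \|_{L^2_{\xi}(\supp \mathcal{F}[\mathcal{I}_n[u_0]])} \le C^n F^{(1)}_s(A)$, where $F^{(1)}_s$ is obtained from $F_s$ by freezing the transverse directions at scale $1$. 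Since only $A \lesssim 1$ is used, only the case $F^{(1)}_s(A) \sim A^{1/2}$ occurs, and the analogue of Lemma~\ref{lem:boundHs} becomes $\| \mathcal{I}_n[u_0](t) \|_{H^s} \le C^n t^{\frac{n-1}{p-1}} \| u_0 \|_{\mathcal{F}L^1}^{n-2} \| u_0 \|_{L^2}^2 A^{1/2}$ for $n \ge 2$. For the principal term, \eqref{modulation} gives $|M(\xi_1, \dots, \xi_p)| \lesssim N^{\alpha-\beta}A^{\beta} + 1 \sim 1$ whenever $\xi_j \in \widetilde{Q}_A^{(1)}(N) \cup \widetilde{Q}_A^{(1)}(2N)$ and $\xi_1 + \dots + \xi_p \in Q_A^{(1)}(0)$, so that for $0 < t \ll 1$ one still has $\Re \int_0^t e^{-it'M}\,dt' > t/2$; combined with the slab form of Lemma~\ref{lem:conv} this produces
\[
\| \mathcal{I}_p[u_0](t) \|_{H^s} \gtrsim t\, \| u_0 \|_{\mathcal{F}L^1}^{p-2} \| u_0 \|_{L^2}^2 A^{1/2} \sim t\, (\log N)^{-\frac{p}{16}} N^{-ps - \frac{(p-1)(\alpha-\beta)}{2\beta}} .
\]
The triangle inequality then gives $\| u(t) \|_{H^s} \ge \| \mathcal{I}_p[u_0](t) \|_{H^s} - \| \mathcal{I}_1[u_0](t) \|_{H^s} - \sum_{n \ge 2p-1} \| \mathcal{I}_n[u_0](t) \|_{H^s} \gtrsim t\, \| u_0 \|_{\mathcal{F}L^1}^{p-2} \| u_0 \|_{L^2}^2 A^{1/2}$ as soon as $t \| u_0 \|_{\mathcal{F}L^1}^{p-1} \ll 1$.

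To conclude, as at the end of the proof of Proposition~\ref{prop:disIP} I would take $t \sim (\log N)^{-1/8} \min(\| u_0 \|_{\mathcal{F}L^1}^{-(p-1)}, 1)$; then $t \| u_0 \|_{\mathcal{F}L^1}^{p-1} \lesssim (\log N)^{-1/8} \to 0$, $t \ll 1$, and the remaining requirement $\| u(t) \|_{H^s} \gtrsim (\log N)^{1/8}$ reduces to
\[
\min \Bigl( (\log N)^{-\frac{1}{16}} N^{-s},\ (\log N)^{-\frac{p}{16}} N^{-ps - \frac{(p-1)(\alpha-\beta)}{2\beta}} \Bigr) \gtrsim (\log N)^{\frac14},
\]
which holds for $N$ large precisely when $s < 0$ and $-ps - \frac{(p-1)(\alpha-\beta)}{2\beta} > 0$, i.e.\ $s < \frac{\alpha-\beta}{2\beta}\bigl( \frac1p - 1 \bigr)$. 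Proposition~\ref{prop:gIP} together with Remark~\ref{rmk:persistence} then upgrade this to norm inflation with general initial data, hence to the everywhere-discontinuity of the flow map of \eqref{eq:dispersive} in $H^s(\R^d)$.

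The step I expect to be the main obstacle is the slab analogue of Lemma~\ref{lem:boundHs}: I must ensure that $\| \lr{\cdot}^s \|_{L^2}$ over $\supp \mathcal{F}[\mathcal{I}_n[u_0]]$ is bounded by $C^n$ times the \emph{same} function of $A$ (namely $A^{1/2}$, since $A \lesssim 1$) that governs the lower bound for $\mathcal{I}_p$, even though the transverse extent of that support grows linearly in $n$; this is exactly what makes $\sum_{n \ge 2p-1} \| \mathcal{I}_n[u_0](t) \|_{H^s}$ a genuine perturbation of the $\mathcal{I}_p$ term, and it is where $s < 0$ is used. Everything else is a faithful transcription of the proof of Proposition~\ref{prop:disIP}; the one substantive difference is that the modulation bound now comes from \eqref{modulation} rather than \eqref{est:mod}, which is precisely why $A$ may be taken as small as $N^{-(\alpha-\beta)/\beta}$ and the admissible range of $s$ improves to $s < \frac{\alpha-\beta}{2\beta}(\frac1p - 1)$.
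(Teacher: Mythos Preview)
Your proof is essentially the paper's: slab initial data, the slab analogue of Lemma~\ref{lem:boundHs} with $A^{1/2}$ in place of $F_s(A)$ (since $A\le 1$), the lower bound on $\mathcal{I}_p$ from \eqref{modulation}, and the choice $A=N^{-(\alpha-\beta)/\beta}$, $t\sim(\log N)^{-1/8}\min(\|u_0\|_{\mathcal{F}L^1}^{-(p-1)},\lr{N^{\alpha-\beta}A^\beta}^{-1})$.

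One point should be corrected. The paper takes only the single slab $\widetilde{Q}_A^{(1)}(N)$, not $\widetilde{Q}_A^{(1)}(N)\cup\widetilde{Q}_A^{(1)}(2N)$. This matters because hypothesis~\eqref{modulation} is stated precisely for $\xi_j\in\widetilde{Q}_A^{(1)}(N)$ with $\sum_j\xi_j\in Q_A^{(1)}(0)$; once you include the $2N$ slab, the lower bound for $\mathcal{F}[\mathcal{I}_p[u_0]]$ on $Q_A^{(1)}(0)$ also picks up mixed $(\pm N,\pm 2N)$ configurations, for which \eqref{modulation} says nothing. Your assertion ``$|M|\lesssim N^{\alpha-\beta}A^\beta+1$ whenever $\xi_j\in\widetilde{Q}_A^{(1)}(N)\cup\widetilde{Q}_A^{(1)}(2N)$'' therefore overreaches the hypothesis, and a large modulation on one of those mixed configurations could destroy the pointwise inequality $\Re\int_0^t e^{-it'M}\,dt'>t/2$ that your lower bound uses. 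Dropping the $2N$ slab removes the issue and leaves the rest of your argument intact; this is exactly what the paper does. (There is also a harmless slip in the $(\log N)$ powers of your final displayed minimum: carrying the factors through gives $(\log N)^{-3/16}N^{-s}$ and $(\log N)^{-(p+2)/16}N^{-ps-\frac{(p-1)(\alpha-\beta)}{2\beta}}$, matching the paper, but this does not affect the conclusion.)
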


\begin{proof}
Let
\[
u_0 = (\log N)^{-\frac{1}{16}} N^{-s} A^{-\frac{1}{2}} \mathcal{F}^{-1} [ \bm{1}_{\widetilde{Q}_A^{(1)}(N)}]
\]
for $A \le 1 < N$.
Then,
\[
\| u_0 \| _{H^s} \sim (\log N)^{-\frac{1}{16}}, \quad
\| u_0 \| _{\mathcal{F}L^1} \sim (\log N)^{-\frac{1}{16}} N^{-s} A^{\frac{1}{2}} .
\]
As in Lemma \ref{lem:boundHs}, for any $s<0$, there exists $C$ depending only on $d$, $p$, $s$ such that for $n \ge 2$, we have
\[
\| \mathcal{I}_n[u_0] (t) \| _{H^s} \le C^n t^{\frac{n-1}{p-1}} \| u_0 \|_{\mathcal{F}L^1}^{n-2} \| u_0 \|_{L^2}^2 A^{\frac{1}{2}}.
\]
Since
\[
\left | \int _0^t e^{ -it' M(\xi_1, \dots , \xi_p)} dt' \right|
\gtrsim t
\]
for $0< t \ll \lr{N^{\alpha-\beta} A^{\beta}}^{-1}$, as in Proposision \ref{prop:disIP}, it suffices to choose $t=t(N)>0$ and $A=A(N) \le 1$ satisfying
\[
t \| u_0 \|_{\mathcal{F}L^1}^{p-1} \ll 1 , \quad
t \lr{N^{\alpha - \beta} A^{\beta}} \ll 1 , \quad
t \| u_0 \|_{\mathcal{F}L^1}^{p-2} \| u_0 \|_{L^2}^2 A^{\frac{1}{2}} \gtrsim (\log N)^{\frac{1}{8}} .
\]
We take
\[
t \sim (\log N)^{-\frac{1}{8}} \min ( \| u_0 \|_{\mathcal{F}L^1}^{-(p-1)}, \lr{N^{\alpha - \beta} A^{\beta}}^{-1}) ,\quad
A = N^{-\frac{\alpha - \beta}{\beta}}.
\]
Then, the first and second conditions are fulfilled.
The third inequality is reduced to
\[
\min \left\{ (\log N)^{-\frac{3}{16}} N^{-s} , (\log N)^{-\frac{p+2}{16}} N^{-ps- \frac{\alpha-\beta}{2\beta} (p-1)} \right\} \gtrsim (\log N)^{\frac{1}{8}} .
\]
This is fulfilled provided that $s<\frac{\alpha-\beta}{2\beta}(\frac{1}{p}-1)$.
\end{proof}

For example, the flow map of the Cauchy problem for the quadratic nonlinear Schr\"{o}dinger equation
\[
i \partial _tu - \Delta u = |u|^2, \quad
u(0,x) = u_0(x)
\]
is discontinuous everywhere in $H^s(\R^d)$ if $s<-\frac{1}{4}$.
The discontinuity at zero is proved in \cite{KT} when $d=1$ (see also \cite{IU}).
From
\[
|M(\xi_1,\xi_2)|
= ||\xi_1-\xi_2|^2-|\xi_1|^2+|\xi_2|^2|
= ||\xi_1-\xi_2|^2-(\xi_1-\xi_2)\cdot (\xi_1+\xi_2)|
\]
we can apply Corollary \ref{cor:disIP} with $\varphi (r)=r^2$, $p=2$, $m=1$, $\alpha =2$, $\beta =1$.

Zheng \cite{Zhe} showed that the Cauchy problem for the quadratic nonlinear fourth order Schr\"{o}dinger equation
\begin{equation} \label{4NLS}
i \partial _tu + \Delta ^2 u = |u|^2, \quad
u(0,x) = u_0(x)
\end{equation}
is well-posed in $H^s(\R)$ with $s>-\frac{3}{4}$.
We get that this is almost optimal.
Indeed, from
\begin{align*}
|M(\xi_1,\xi_2)|
& = ||\xi_1-\xi_2|^4-|\xi_1|^4+|\xi_2|^4| \\
& = |4(\xi_1-\xi_2)\cdot \xi_2 (\xi_1\cdot \xi_2)+2|\xi_1-\xi_2|^2 (2\xi_1-\xi_2)\cdot \xi_2|,
\end{align*}
we can apply Corollary \ref{cor:disIP} with $\varphi (r)=r^4$, $p=2$, $m=1$, $\alpha =4$, $\beta =1$, which yields that ill-posedness of \eqref{4NLS} in $H^s(\R^d)$ with $s<-\frac{3}{4}$.

\section{Proof of Theorem \ref{IP}}

First, we note that \eqref{rSch} is equivalent to the following integral equation.
\begin{align*}
v=e^{-it\Delta} v_0 & + \frac{i}{2} \int_0^t e^{-i(t-t')\Delta} (v(t')-\overline{v}(t')) dt' \\
& \quad - \frac{i}{2^p} \int _0^t e^{-i(t-t')\Delta} \omega (\sqrt{-\Delta}) (v(t')+ \overline{v}(t'))^p dt' .
\end{align*}
Hence, setting
\begin{align*}
& \mathcal{L}(v_0) = e^{-it\Delta}v_0, \ \quad \mathcal{M}(v) = \frac{i}{2} \int_0^t e^{-i(t-t')\Delta} (v(t')-\overline{v}(t')) dt' , \\
& \mathcal{N}_p (v_1,\dots , v_p) = -\frac{i}{2^p} \int _0^t e^{-i(t-t')\Delta} \omega (\sqrt{-\Delta}) \prod _{j=1}^p (v_j(t')+ \overline{v}_j(t')) dt'
\end{align*}
we have
\[
v = \mathcal{L}(v_0) +\mathcal{M}(v) + \mathcal{N}_p(v, \dots , v).
\]
Let $D := L^2(\R^d) \cap \mathcal{F}^{-1} L^1(\R^d)$ and $S_T := L^{\infty}_T D$.
Since $D$ is a Banach algebra, we have
\[
\| \mathcal{L}(v_0) \| _{S_T} = \| v_0 \| _{D}, \quad
\| \mathcal{M}(v) \| _{S_T} \le T \| v \| _{S_T}, \quad
\| \mathcal{N}_p ( v, \dots, v) \| _{S_T} \le T \| v \| _{S_T}^p .
\]
Hence, the condition (A) holds with $C_{\ast}=1$, $\delta =1$, and $B= H^s (\R^d)$ ($s<0$).

From Proposition \ref{prop:gIP}, we can ignore $M$.
Let
\[
v_0 = (\log N)^{-\frac{1}{16}} N^{-s} A^{-\frac{d}{2}} \mathcal{F}^{-1} [ \bm{1}_{\widetilde{Q}_A(N)} + \bm{1}_{\widetilde{Q}_A(2N)} ]
\]
for $1 \le A \le N$, which is the same initial data in the previous section.
Note that
\begin{align*}
\mathcal{I}_p [v_0]
& = -\frac{i}{2^p} \int_0^t e^{-i(t-t')\Delta} \omega (\sqrt{-\Delta}) \left( e^{-it'\Delta} v_0 + e^{it'\Delta} \overline{v_0} \right) ^p dt' \\
& = -i \int_0^t e^{-i(t-t')\Delta} \omega (\sqrt{-\Delta}) \left( \cos (t'\Delta) v_0 \right) ^p dt'
\end{align*}
because $v_0$ is real valued.
A direct calculation yields
\begin{align*}
| \mathcal{F}[\mathcal{I}_p [v_0]](t,\xi) |
& = \frac{|\xi|^2}{1+|\xi|^2} \left| \int_{\ast} \int_0^t e^{i(t-t')|\xi|^2} \prod_{j=1}^p \cos (t'|\xi_j|^2) \widehat{v}_0(\xi_j) dt' \right| \\
& \gtrsim t \bm{1}_{Q_A(0) \backslash Q_1(0)} \int_{\ast} \prod_{j=1}^p \widehat{v}_0(\xi_j) 
\end{align*}
for $0<t \ll N^{-2}$ and $A \in [1,N]$.
Hence, we can apply the same argument as in Proposition \ref{prop:disIP} with $\varphi (r)=r^2$ and $\alpha =2$.
Accordingly, we obtain \eqref{gB} is ill-posed in $H^s(\R^d)$ if ($d=1$, $p=3$, and $s \le -\frac{1}{2}$) or ($d\in \mathbb{N}$ and $s<\min (s_c,0)$).

When $d=1,2$, $p=2$, and $s_c \le s<-\frac{1}{2}$, we set
\[
u_0 = (\log N)^{-\frac{1}{16}} N^{-s} A^{-\frac{1}{2}} \mathcal{F}^{-1} [ \bm{1}_{\widetilde{Q}_A^{(1)}(N)}]
\]
for $A\le 1 \le N$.
Since
\[
||\xi_1|^2-|\xi_2|^2| = |(\xi_1+\xi_2) \cdot (\xi_1-\xi_2)|  \lesssim \lr{NA}
\]
for $\xi_1, \xi_2 \in \widetilde{Q}_A(N)$ with $\xi_1+\xi_2 \in Q_A(0)$, we have
\begin{align*}
& |\mathcal{F}[\mathcal{I}_2[v_0]](t,\xi)| \\
& = \omega (\xi) \left| \int_{\xi_1+\xi_2=\xi} \int_0^t e^{i(t-t')|\xi|^2} \prod_{j=1}^2 \cos (t'|\xi_j|^2) \widehat{v}_0(\xi_j)  dt'  \right| \\
& \ge \frac{\omega (\xi)}{2} \bm{1}_{Q_A^{(1)}(0)} (\xi) \bigg| \int_{\xi_1+\xi_2=\xi} \int_0^t e^{i(t-t')|\xi|^2} (\cos (t' (|\xi_1|^2+|\xi_2|^2)) \\
& \hspace*{150pt} + \cos (t' (\xi_1^2-\xi_2^2))) \widehat{v}_0(\xi_1) \widehat{v}_0(\xi_2) dt' \bigg| \\
& \ge \frac{\omega (\xi)}{2} \bm{1}_{Q_A^{(1)}(0)} (\xi) \int_{\xi_1+\xi_2=\xi} \bigg\{ \int _0^t \cos ((t-t')|\xi|^2) \cos (t'(|\xi_1|^2-|\xi_2|^2)) dt' \\
& \hspace*{100pt} -  \left| \int_0^t \cos ((t-t')|\xi|^2) \cos (t' (|\xi_1|^2+|\xi_2|^2)) dt' \right| \bigg\} \widehat{v}_0(\xi_1) \widehat{v}_0(\xi_2) \\
& \gtrsim \left( t - \frac{1}{N^2} \right) \omega (\xi) \bm{1}_{Q_A^{(1)}(0)} (\xi) \mathcal{F}[v_0^2](\xi) 
\end{align*}
for $N^{-2} \lesssim t \ll \lr{NA}^{-1}$.
Hence,
\[
\| \mathcal{I}_2 [v_0] (t) \| _{H^s}
\gtrsim t (\log N)^{-\frac{1}{8}} N^{-2s} \| \xi^2 \|_{L^2(Q_A^{(1)}(0))}
\gtrsim t (\log N)^{-\frac{1}{8}} N^{-2s} A^{\frac{5}{2}}.
\]
As in Corollary \ref{cor:disIP}, it suffices to choose $t=t(N)>0$ and $A=A(N) \le 1$ satisfying
\begin{align*}
& t N^{-s} A^{\frac{1}{2}} \le (\log N)^{-\frac{1}{8}} , \quad
2 N^{-2} \le t \le \lr{NA}^{-1} (\log N)^{-\frac{1}{8}} , \\
& t (\log N)^{-\frac{1}{8}} N^{-2s} A^{\frac{5}{2}} > (\log N)^{\frac{1}{8}} .
\end{align*}
For $s_c \le s<-\frac{1}{2}$, taking $\max \{ \frac{s}{2}, \frac{2}{3} (2s+1) \} < \theta <0$, we set
\[
t = (\log N)^{-\frac{1}{8}} \min ( N^s A^{-\frac{1}{2}}, N^{-1}A^{-1} ), \quad
A=N^{\theta} .
\]
Then,
\[
t (\log N)^{-\frac{1}{8}} N^{-2s} A^{\frac{5}{2}}
= (\log N)^{-\frac{1}{4}} \min (N^{-s+2\theta}, N^{-2s-1+\frac{3\theta}{2}}) \gg (\log N)^{\frac{1}{8}},
\]
which concludes the proof.

\begin{rmk} \label{rmk:complex}
If $u$ is a complex valued function, setting
\[
v = u - i (1-\Delta)^{-1} \partial _t u , \quad
\overline{w} = u+i (1-\Delta)^{-1} \partial _t u,
\]
we rewrite \eqref{gB} as
\begin{align*}
& (i\partial _t - \Delta) v = - \frac{v-\overline{w}}{2} + \frac{1}{2^p} \omega (\sqrt{-\Delta}) (v+\overline{w})^p, \\
& (i\partial _t - \Delta) w = \frac{\overline{v}-w}{2} + \frac{1}{2^p} \omega (\sqrt{-\Delta}) (\overline{v}+w)^p .
\end{align*}
From Proposition \ref{prop:gIP}, we may ignore the parts $- \frac{v-\overline{w}}{2}$ and $\frac{\overline{v}-w}{2}$.
Owing to well-posedness in $ L^2(\R^d) \cap \mathcal{F}L^1(\R^d)$, we have
\[
v = \sum _{n=1}^{\infty} \mathcal{J}_n^1[v_0,w_0] , \quad
w = \sum _{n=1}^{\infty} \mathcal{J}_n^2[v_0,w_0] .
\]
Putting $v(0,x)=w(0,x)=v_0(x)$ as above, we have 
\[
\mathcal{J}_p^1[v_0,v_0] = \mathcal{J}_p^2[v_0,v_0] = \mathcal{I}_p[v_0].
\]
Hence, the same argument as above is applicable.
\end{rmk}

\section{Norm inflation with general initial data for the Kawahara equation}
\label{IP:Kawahara}

First, we show norm inflation at zero.
We set
\[
u_0 = (\log N)^{-1} N^{-s} \mathcal{F}^{-1} [ \bm{1}_{[N-1,N+1] \cup [-N-1,-N+1]} ] .
\]
We define $\mathcal{I}_n$ as in \eqref{iteration}.
Namely,
\begin{align*}
& \mathcal{I}_1 [u_0] (t) := e^{t(\partial_x^5-\mathfrak{b} \partial_x^3)} u_0, \\
& \mathcal{I}_n[u_0] (t) := - \sum _{\substack{n_1, n_2 \in \mathbb{N} \\ n_1+n_2=n}} \int_0^t e^{(t-t')(\partial_x^5-\mathfrak{b} \partial_x^3)} \partial _x (\mathcal{I}_{n_1}[u_0] \mathcal{I}_{n_2}[u_0]) (t') dt'
\end{align*}
for $n \ge 2$.

\begin{lem} \label{lem:boundKaw}
There exists $C_1>1$ such that
\begin{align*}
& \| \mathcal{I}_n [u_0] (t) \| _{\mathcal{F}L^1} \le (C_1 N t \| u_0 \| _{\mathcal{F}L^1} )^{n-1} \| u_0 \|_{\mathcal{F}L^1} , \\
& \| \mathcal{I}_n [u_0] (t) \| _{L^2} \le (C_1 N t \| u_0 \|_{\mathcal{F}L^1} ) ^{n-1} \| u_0 \|_{L^2}
\end{align*}
for $t>0$.
\end{lem}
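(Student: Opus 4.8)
The plan is to run the same induction as in the proof of Lemma~\ref{lem:bound}, with one extra ingredient to handle the derivative in $\partial_x(u^2)$: since the initial datum $u_0$ is frequency-localized near $\pm N$, the operator $\partial_x$ will act on the iterates as a Fourier multiplier of size $\lesssim nN$, and this is tracked explicitly in the stated bound (through the factor $Nt\|u_0\|_{\mathcal{F}L^1}$) up to harmless powers of $n$. Note also that $e^{t(\partial_x^5-\mathfrak{b}\partial_x^3)}$ is a Fourier multiplier of modulus one, hence support-preserving and an isometry on both $\mathcal{F}L^1$ and $L^2$; this is what leaves the derivative as the only new difficulty.

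First I would show by induction on $n$ that $\supp\mathcal{F}[\mathcal{I}_n[u_0]]\subset\{\xi\in\R:|\xi|\le 2nN\}$. For $n=1$ this holds because $\supp\widehat{u_0}\subset[N-1,N+1]\cup[-N-1,-N+1]\subset\{|\xi|\le 2N\}$; for $n\ge 2$, the Fourier support of $\mathcal{I}_{n_1}[u_0]\,\mathcal{I}_{n_2}[u_0]$ lies in the sumset $\{|\xi|\le 2n_1N\}+\{|\xi|\le 2n_2N\}=\{|\xi|\le 2nN\}$, and this is enlarged neither by $\partial_x$ nor by the propagator. Consequently, whenever $\supp\widehat h\subset\{|\xi|\le R\}$ one has $\|\partial_x h\|_{\mathcal{F}L^1}\le R\|h\|_{\mathcal{F}L^1}$ and $\|\partial_x h\|_{L^2}\le R\|h\|_{L^2}$, to be applied with $R=2nN$.

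Next, as in Lemma~\ref{lem:bound}, I would introduce the sequence $a_1:=1$, $a_n:=\frac{2n}{n-1}\sum_{n_1+n_2=n}a_{n_1}a_{n_2}$ for $n\ge 2$, and prove by induction that $\|\mathcal{I}_n[u_0](t)\|_{\mathcal{F}L^1}\le a_n(Nt\|u_0\|_{\mathcal{F}L^1})^{n-1}\|u_0\|_{\mathcal{F}L^1}$ and $\|\mathcal{I}_n[u_0](t)\|_{L^2}\le a_n(Nt\|u_0\|_{\mathcal{F}L^1})^{n-1}\|u_0\|_{L^2}$. The inductive step combines the isometry property of the propagator, the derivative bound above with $R=2nN$, the algebra estimate $\|fg\|_{\mathcal{F}L^1}\le\|f\|_{\mathcal{F}L^1}\|g\|_{\mathcal{F}L^1}$ (together with $\|fg\|_{L^2}\le\|f\|_{L^2}\|g\|_{\mathcal{F}L^1}$ for the $L^2$ bound), the induction hypothesis, and $\int_0^t (t')^{n-2}\,dt'=t^{n-1}/(n-1)$; the factor $n$ produced by $\partial_x$ multiplies the $1/(n-1)$ from the time integral, reproducing exactly the recursion defining $a_n$. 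Finally, since $\frac{2n}{n-1}\le 4$ for $n\ge 2$, the sequence satisfies $a_n\le 4\sum_{n_1+n_2=n}a_{n_1}a_{n_2}$, so Lemma~\ref{lem:bound-seq} with $p=2$ and $C=4$ gives $a_n\le C_1^{\,n-1}$ with $C_1:=\frac{8\pi^2}{3}>1$, which is the assertion.

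The main obstacle is precisely the derivative: no $\mathcal{F}L^1$ (or $L^2$) product estimate can absorb $\partial_x$, so the argument genuinely depends on the frequency-localized choice of $u_0$, which is why we estimate the iterates $\mathcal{I}_n[u_0]$ directly rather than trying to set up well-posedness in $\mathcal{F}L^1$. The only point to verify is that the localization $\supp\mathcal{F}[\mathcal{I}_n[u_0]]\subset\{|\xi|\lesssim nN\}$ costs at most one power of $n$ per iterate; this is benign because $n/(n-1)\le 2$ and Kishimoto's summation lemma (Lemma~\ref{lem:bound-seq}) swallows it into the constant $C_1$.
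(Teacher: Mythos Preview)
Your proposal is correct and follows essentially the same approach as the paper: introduce the auxiliary sequence $a_n$ with recursion $a_n=\frac{2n}{n-1}\sum_{n_1+n_2=n}a_{n_1}a_{n_2}$, prove the bounds with $a_n$ by induction using the Fourier support inclusion $\supp\mathcal{F}[\mathcal{I}_n[u_0]]\subset[-2nN,2nN]$ to control $\partial_x$, and then invoke Lemma~\ref{lem:bound-seq}. The only differences are cosmetic: you spell out the support induction and the explicit value of $C_1$, whereas the paper states the support claim inline and leaves the constant implicit.
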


\begin{proof}
Let $\{ a_n \}$ be the sequence defined by
\[
a_1 = 1, \quad
a_n = \frac{2n}{n-1} \sum _{\substack{n_1, n_2 \in \mathbb{N} \\ n_1+n_2=n}} a_{n_1} a_{n_2}
\]
for $n \ge 2$.
Owing to Lemma \ref{lem:bound-seq}, it suffices to show that
\begin{align}
& \| \mathcal{I}_n [u_0] (t) \| _{\mathcal{F}L^1} \le a_n ( N t \| u_0 \| _{\mathcal{F}L^1} ) ^{n-1} \| u_0 \|_{\mathcal{F}L^1} , \notag \\
& \| \mathcal{I}_n [u_0] (t) \| _{L^2} \le a_n (N t \| u_0 \| _{\mathcal{F}L^1} ) ^{n-1} \| u_0 \|_{L^2} . \label{bound:L2k}
\end{align}
Since $n=1$ is trivial, we assume that these estimates hold up to $n-1$.
Then, from the induction hypothesis and $\supp \mathcal{F}[\mathcal{I}_n [u_0] (t)] \subset [-2nN,2nN]$, we have
\begin{align*}
\| \mathcal{I}_n [u_0] (t) \| _{\mathcal{F}L^1}
& \le 2nN \sum _{\substack{n_1, n_2 \in \mathbb{N} \\ n_1+n_2=n}} \int_0^t \bigg\| \mathcal{I}_{n_1}[u_0] (t') \mathcal{I}_{n_2}[u_0] (t') \bigg\| _{\mathcal{F}L^1} dt' \\
& \le 2nN \sum _{\substack{n_1, n_2 \in \mathbb{N} \\ n_1+n_2=n}} \int_0^t \prod_{j=1}^2 \bigg\| \mathcal{I}_{n_j}[u_0] (t') \bigg\| _{\mathcal{F}L^1} dt' \\
& \le 2nN \sum _{\substack{n_1,n_2 \in \mathbb{N} \\ n_1+n_2=n}} \int_0^t \prod _{j=1}^2 \bigg\{ a_j ( N t' \| u_0 \| _{\mathcal{F}L^1} ) ^{n_j-1} \| u_0 \|_{\mathcal{F}L^1} \bigg\} dt' \\
& = a_n ( N t \| u_0 \|_{\mathcal{F}L^1} ) ^{n-1} \| u_0 \|_{\mathcal{F}L^1} .
\end{align*}
The estimate \eqref{bound:L2} is obtained in the same manner.
\end{proof}

The same argument as in Lemma \ref{lem:boundHs} yields
\begin{equation} \label{boundHsk}
\| \mathcal{I}_n[u_0] (t) \| _{H^s} \le C^n (Nt)^{n-1} \| u_0 \|_{\mathcal{F}L^1}^{n-2} \| u_0 \|_{L^2}^2
\end{equation}
for $n \ge 2$ and $s<-\frac{1}{2}$.
Hence, the power series $\sum _{n=1}^{\infty} \mathcal{I}_n [u_0]$ converges to $u$ in $C([0,T]; H^s(\R))$ provided that
\begin{equation} \label{cond:Ka1}
T N \| u_0 \|_{\mathcal{F}L^1} \sim T (\log N)^{-1} N^{-s+1} \ll 1.
\end{equation}
Moreover, the limit $u$ solves \eqref{Kawahara}.
In contrast, as in \cite{Kat11},
\begin{align*}
& | \mathcal{F}[\mathcal{I}_2[u_0]] (t,\xi) | \\
& = \bigg| \xi \int _0^t e^{i(t-t')(\xi^5+\mathfrak{b} \xi^3)} \int_{\xi_1+\xi_2=\xi} e^{it'(\xi_1^5+\mathfrak{b} \xi_1^3)} \widehat{u}_0(\xi_1) e^{it'(\xi_2^5+\mathfrak{b} \xi_2^3)} \widehat{u}_0(\xi_2)dt' \bigg| \\
& = |\xi| \bigg| \int_{\xi_1+\xi_2=\xi} \widehat{u}_0(\xi_1) \widehat{u}_0(\xi_2) \int_0^t e^{-t' M(\xi_1,\xi_2)} dt' \bigg|,
\end{align*}
where
\[
M(\xi_1,\xi_2) = (\xi_1+\xi_2)^5+\mathfrak{b} (\xi_1+\xi_2)^3 - \xi_1^5-\mathfrak{b} \xi_1^3-\xi_2^5-\mathfrak{b} \xi_2^3.
\]
By $\xi_1^5+\xi_2^5= (\xi_1+\xi_2)(\xi_1^4-\xi_1^3\xi_2+\xi_1^2\xi_2^2-\xi_1\xi_2^3+\xi_2^4)$, we have
\[
|M(\xi_1,\xi_2)| \lesssim N^4
\]
for $\xi_1, \, \xi_2 \in [N-1,N+1] \cup [-N-1,-N+1]$ with $\xi_1+\xi_2 \in [-1,1]$.
Therefore, we obtain
\[
| \mathcal{F}[\mathcal{I}_2[u_0]] (t,\xi) | \gtrsim |\xi| t (\log N)^{-2} N^{-2s} \bm{1}_{[-1,1]} (\xi)
\]
provided that $t=N^{s-2}$ with $s<-2$, which satisfies \eqref{cond:Ka1}.
Accordingly,
\[
\| \mathcal{I}_2[u_0] (t) \|_{H^s}
\gtrsim t (\log N)^{-2} N^{-2s} \| |\xi| \|_{L^2([-1,1])}
\sim (\log N)^{-2} N^{-s-2} .
\]
Hence, by the triangle inequality and \eqref{boundHsk},
\begin{align*}
\| u(t) \|_{H^s}
& \ge \| \mathcal{I}_2[u_0] \|_{H^s} - \| \mathcal{I}_1[u_0] \|_{H^s} - \sum _{n=3}^{\infty} \| \mathcal{I}_n[u_0] \|_{H^s} \\
& \gtrsim (\log N)^{-2} N^{-s-2} - (\log N)^{-1} -  (Nt)^2 (\log N)^{-3} N^{-3s} \\
& \sim (\log N)^{-2} N^{-s-2} ,
\end{align*}
which proves norm inflation at zero.

\begin{rmk}
Here, since we do not relay on well-posedness in $L^2(\R) \cap \mathcal{F} L^1(\R)$, we have to confirm that $\mathcal{I}_2[u_0]$ is a leading part of the expansion.
From $\| \mathcal{I}_2 [u_0] \|_{H^s} \gtrsim t \| u_0 \|_{L^2}^2$ and 
\[
\sum _{n=3}^{\infty} \| \mathcal{I}_n [u_0] \|_{H^s} \lesssim \| \mathcal{I}_3 [u_0] \|_{H^s} \lesssim (Nt)^2 \| u_0 \|_{\mathcal{F}L^1} \| u_0 \|_{L^2}^2 = t \| u_0 \|_{L^2} \times t N^2 \| u_0 \|_{\mathcal{F}L^1},
\]
we have to choose $t$ as $t N^2 \| u_0 \|_{\mathcal{F}L^1} \sim t (\log N)^{-1} N^{-s+2}<1$.

\end{rmk}

Second, we prove Theorem \ref{IP:Kaw}.
By the same argument as in the proof of Proposition \ref{prop:gIP}, it suffices to show norm inflation at any $\varphi \in \mathcal{S}(\R)$.
Set
\[
\varphi _N := \mathcal{F}[\bm{1}_{[-N,N]} \widehat{\varphi}], \quad
v_0 := u_0+ \varphi _N.
\]
Since $\supp \mathcal{F}[\mathcal{I}_n [v_0] (t)] \subset [-2nN,2nN]$, the same calculation as above yields
\begin{equation} \label{boundHsk2}
\| \mathcal{I}_n[v_0] (t) \| _{H^s} \le C^n (Nt)^{n-1} \| v_0 \|_{\mathcal{F}L^1}^{n-2} \| v_0 \|_{L^2}^2 .
\end{equation}
and
\[
v := \sum _{n=1}^{\infty} \mathcal{I}_n[v_0] \quad \text{in } C([0,T]; H^s(\R))
\]
solves \eqref{Kawahara}.

By $\| \varphi _N \|_{L^2 \cap \mathcal{F}L^1} \le \| \varphi \|_{L^2 \cap \mathcal{F}L^1}$, we have
\[
\| v_0 \|_{L^2 \cap \mathcal{F}L^1} \le 2 \| u_0 \|_{L^2 \cap \mathcal{F}L^1}
\]
for sufficiently large $N$.
Then, the same argument as in Lemma \ref{lem:boundKaw} implies the following.
\begin{lem}
There exists $C_1>1$ such that
\begin{align*}
& \| \mathcal{I}_n[u_0] (t)  - \mathcal{I}_n[v_0] (t) \| _{\mathcal{F}L^1} \le (C_1 N t \| u_0 \| _{\mathcal{F}L^1} )^{n-1} \| \varphi \|_{L^2 \cap \mathcal{F}L^1} , \\
& \| \mathcal{I}_n[u_0] (t)  - \mathcal{I}_n[v_0] (t) \| _{L^2} \le (C_1 N t \| u_0 \|_{\mathcal{F}L^1} ) ^{n-1} \| \varphi \|_{L^2 \cap \mathcal{F}L^1}
\end{align*}
for $t>0$.
\end{lem}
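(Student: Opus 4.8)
The plan is to prove both estimates simultaneously by induction on $n$, mirroring the proof of Lemma~\ref{lem:boundKaw} but carrying the difference through the recursion. For $n=1$ we have $u_0-v_0=-\varphi_N$, and since $e^{t(\partial_x^5-\mathfrak{b}\partial_x^3)}$ is a unimodular Fourier multiplier, $\mathcal{I}_1[u_0](t)-\mathcal{I}_1[v_0](t)=-e^{t(\partial_x^5-\mathfrak{b}\partial_x^3)}\varphi_N$, whose $\mathcal{F}L^1$ and $L^2$ norms equal those of $\varphi_N$; by the frequency truncation, $\norm{\varphi_N}_{\mathcal{F}L^1}\le\norm{\varphi}_{\mathcal{F}L^1}$ and $\norm{\varphi_N}_{L^2}\le\norm{\varphi}_{L^2}$, so the base case follows.

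For the inductive step, set $w_n:=\mathcal{I}_n[u_0]-\mathcal{I}_n[v_0]$. Using the bilinearity of the Duhamel nonlinearity together with the identity $ab-cd=a(b-d)+(a-c)d$, one obtains
\[
w_n(t) = -\sum_{\substack{n_1,n_2\in\mathbb{N}\\ n_1+n_2=n}}\int_0^t e^{(t-t')(\partial_x^5-\mathfrak{b}\partial_x^3)}\partial_x\bigl(\mathcal{I}_{n_1}[u_0]\,w_{n_2}+w_{n_1}\,\mathcal{I}_{n_2}[v_0]\bigr)(t')\,dt'.
\]
Each summand is linear in the family $\{w_m\}$: it is a fixed factor ($\mathcal{I}_{n_j}[u_0]$ or $\mathcal{I}_{n_j}[v_0]$, bounded by Lemma~\ref{lem:boundKaw}, using $\norm{v_0}_{L^2\cap\mathcal{F}L^1}\le 2\norm{u_0}_{L^2\cap\mathcal{F}L^1}$ for the latter) times one difference factor (bounded by the induction hypothesis). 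Since $\supp\mathcal{F}[\mathcal{I}_k[u_0](t)]$ and $\supp\mathcal{F}[\mathcal{I}_k[v_0](t)]$ lie in $[-2kN,2kN]$, the derivative contributes only a factor $\lesssim nN$. For the $\mathcal{F}L^1$ bound one estimates the products by the Banach algebra property of $\mathcal{F}L^1$; for the $L^2$ bound one uses $\norm{fg}_{L^2}\le\norm{f}_{\mathcal{F}L^1}\norm{g}_{L^2}$, always placing the $L^2$ norm on the difference factor ($w_{n_2}$ in the first product, $w_{n_1}$ in the second), so that the induction closes.

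Carrying out the time integration ($\int_0^t(t')^{n-2}\,dt'=t^{n-1}/(n-1)$) and collecting the geometric factors leads to bounds of the shape $\norm{w_n(t)}_{\mathcal{F}L^1},\ \norm{w_n(t)}_{L^2}\le b_n(CNt\norm{u_0}_{\mathcal{F}L^1})^{n-1}\norm{\varphi}_{L^2\cap\mathcal{F}L^1}$, where $b_1=1$ and $\{b_n\}$ obeys a recursion that is linear in $\{b_m\}$ with coefficients already known to grow geometrically by Lemma~\ref{lem:boundKaw}. Setting $\beta_n:=\max(a_n,b_n)$, where $\{a_n\}$ is the sequence from the proof of Lemma~\ref{lem:boundKaw}, one checks that $\beta_n\le C'\sum_{n_1+n_2=n}\beta_{n_1}\beta_{n_2}$ with $\beta_1=1$, so Lemma~\ref{lem:bound-seq} gives $b_n\le\beta_n\le C_0^{n-1}$. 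Absorbing $C_0$ and the harmless factors of $2$ (from the two-term splitting and from $\norm{v_0}\le 2\norm{u_0}$) into a single constant $C_1>1$ yields the claimed estimates.

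The argument is routine given Lemmas~\ref{lem:boundKaw} and \ref{lem:bound-seq}; the only genuine point of care is that exactly one difference factor appears in each summand of the recursion for $w_n$, which is what makes the induction close, and, relatedly, that in the $L^2$ estimate the $L^2$ norm must be kept on that difference factor — always possible via $\norm{fg}_{L^2}\le\norm{f}_{\mathcal{F}L^1}\norm{g}_{L^2}$ — while the non-difference factor is measured in $\mathcal{F}L^1$.
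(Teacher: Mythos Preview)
Your proof is correct and is precisely the natural way to flesh out what the paper leaves implicit: the paper's own ``proof'' is the single sentence ``the same argument as in Lemma~\ref{lem:boundKaw} implies the following,'' and your telescoping decomposition $ab-cd=a(b-d)+(a-c)d$, induction on $n$, and appeal to Lemma~\ref{lem:bound-seq} via $\beta_n=\max(a_n,b_n)$ constitute exactly that argument. The one point you rightly emphasize---that in the $L^2$ estimate the $L^2$ norm must always sit on the difference factor---is the only place where care is needed, and you handle it correctly.
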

Accordingly, we have
\[
\| \mathcal{I}_n[u_0] (t)  - \mathcal{I}_n[v_0] (t)  \|_{H^s}
\le C^n (Nt)^{n-1} \| u_0 \|_{\mathcal{F}L^1}^{n-2} \| \varphi \|_{L^2 \cap \mathcal{F}L^1}^2
\]
for $n \ge 2$.
Hence,
\begin{align*}
\| v(t) \| _{H^s}
& \ge \| u(t) \|_{H^s} - \sum _{n=1}^{\infty} \| \mathcal{I}_n[u_0] (t)  - \mathcal{I}_n[v_0] (t)  \|_{H^s} \\
& \gtrsim (\log N)^{-2} N^{-s-2} - \| \varphi \| _{H^s} - N^{s-1} \| \varphi \| _{L^2 \cap \mathcal{F}L^1}^2 \\
& \sim (\log N)^{-2} N^{-s-2}
\end{align*}
for sufficiently large $N$.
In contrast,
\[
\| v(0) - \varphi \| _{H^s} = \| u_0 +\varphi _N-\varphi \|_{H^s} \le \| u_0 \|_{H^s} + \| (1- \bm{1}_{[-N,N]}) \widehat{\varphi} \|_{L^2}
\]
goes to zero as $N \rightarrow \infty$, which concludes the proof.

\end{document}